\newcommand{\PF}{\mathrm{PF}}
\newcommand{\g}{\mathrm{g}}
\newcommand{\F}{\mathrm{F}}
\newcommand{\G}{\mathrm{G}}
\newcommand{\child}{\textnormal{\texttt{child}}}
\newcommand{\m}{\mathrm{m}}
\newcommand{\supp}{\mathrm{supp}}
\newcommand{\e}{\mathrm{e}}
\newcommand{\T}{\mathrm{T}}
\newcommand{\Ap}{\mathrm{Ap}}
\newcommand{\Z}{\mathbb{Z}}
\newcommand{\N}{\mathbb{N}}
\newcommand{\C}{\mathrm{C}}
\newcommand{\Sem}{\mathcal{S}}
\newcommand{\E}{\mathcal{E}}
\newcommand{\I}{\mathcal{I}}
\newcommand{\Kunz}{\mathcal{K}}
\newcommand{\dsum}{\displaystyle\sum}
\newtheorem{theo}{Theorem}
\newtheorem{ex}[theo]{Example}
\newtheorem{lem}[theo]{Lemma}
\newtheorem{cor}[theo]{Corollary}
\newtheorem{prop}[theo]{Proposition}
\newtheorem{rem}[theo]{Remark}
\begin{document}

\title[Numerical semigroups of a given genus]{The set of numerical semigroups of a given genus}

\author[V. Blanco and J.C. Rosales]{V. Blanco and J.C. Rosales\medskip\\
D\lowercase{epartamento de } \'A\lowercase{lgebra}, U\lowercase{niversidad de } G\lowercase{ranada}\medskip\\
\texttt{\lowercase{vblanco@ugr.es}} \; \texttt{\lowercase{jrosales@ugr.es}}}

\address{Departamento de \'Algebra, Universidad de Granada}

\keywords{Numerical semigroup, Frobenius number, genus, Kunz-coordinates vectors.}

\subjclass[2010]{05A15,20M14,05C05}

\begin{abstract}
In this paper we present a new approach to construct the set of numerical semigroups with a fixed genus. Our methodology is based on the construction of the set of numerical semigroups with fixed Frobenius number and genus. An equivalence relation is given over this set and a tree structure is defined for each equivalence class. We also provide a more efficient algorithm based in the translation of a numerical semigroup to its so-called Kunz-coordinates vector.
\end{abstract}

\maketitle

\section{Introduction}

A numerical semigroup is a subset $S$ of $\N$ (here $\N$ denotes the set of nonnegative integer numbers) closed under
addition, containing zero and such that $\N\backslash S$ is finite. The cardinal of the set $\N \backslash S$ is called the \textit{genus} of $S$ and we denoted it by $\g(S)$.

If $g$ is a positive integer, we denote by $\Sem(g)$ the set of numerical semigroups with genus $g$. The main goal of this paper is to provide an algorithm that allow to compute $\Sem(g)$. The problem of determining the elements in $\Sem(g)$ has been previously addressed by Blanco et. al \cite{counting}, Bras-Amor\'s \cite{bras08a,bras08b}, Bras-Amor\'os and Bulygin \cite{bras08b}, Elizalde \cite{elizalde}, Kaplan \cite{kaplan} and Zhao\cite{zhao}. However, the main goal of these papers was to count the number of elements in $\Sem(g)$ and get patterns, bounds or asymptotic behavior on these numbers. Actually, in \cite{bras08b} it is conjectured that the sequence of cardinals of $\Sem(g)$, for $g=1, 2, \ldots$, has a Fibonacci behavior.

Let $S$ be a numerical semigroup. The largest integer not belonging to $S$ is called the \textit{Frobenius number} (see \cite{alfonsin} for a detailed analysis of this number and related problems) of $S$ and we denote it by $\F(S)$. We represent by $\Sem(F,g)$ the set of numerical semigroups with Frobenius number $F$ and genus $g$. In Section \ref{sec:2} we prove that $\Sem(F,g)\neq \emptyset$ if and only if $g \leq F \leq 2g-1$ and an immediate consequence is that $\Sem(g) = \bigcup_{F=g}^{2g-1} \Sem(F,g)$. Thus, to solve the goal of this paper it is enough to design an algorithm to compute $\Sem(F,g)$ for any positive integers $F$ and $g$ such that $g \leq F \leq 2g-1$.

For any numerical semigroup $S$, the least positive integer belonging to $S$ is called the \textit{multiplicity} of $S$ and we denote it by $\m(S)$. We say that a numerical semigroup $S$ is \textit{elementary} if $\F(S) < 2\m(S)$. This family of semigroups will play an important role through this paper. We denote by $\E(F,g)$ the set of elementary numerical semigroups with Frobenius number $F$ and genus $g$.

In Section \ref{sec:2} we define an equivalence binary relation over $\Sem(F,g)$. The quotient set $\frac{\Sem(F,g)}{R} = \{[S]: S \in \Sem(F,g)\}$ provides us a partition of  $\Sem(F,g)$. We prove that each class contains an unique elementary numerical semigroup and then, $\Sem(F,g) = \bigcup_{S \in \E(F,g)} [S]$. Therefore, to enumerate the elements in $\Sem(F,g)$ we need:
\begin{enumerate}
 \item An algorithm to compute $\E(F,g)$.
\item A procedure that allows us to enumerate the elements in $[S]$ for each $S\in \E(F,g)$.
\end{enumerate}

Corollary \ref{cor:3} solves the first item. The second matter will be solved in Section \ref{sec:3} as follows. In Proposition \ref{prop:9} we see that the elements in $[S]$ can be organized in a rooted tree (with root $S$), and Proposition \ref{prop:11} informs us about how to construct the children of any vertex in the tree. Then, we get a recurent method to compute $[S]$.

In Section \ref{sec:4} we show how to apply the results in this paper to compute the set of irreducible numerical semigroups with a fixed Frobenius number. Note also that the results in this paper allows us to compute the set $\overline{\Sem}(F)$ of all the numerical semigroups with Frobenius number $F$ since $\overline{\Sem}(F) = \bigcup_{g=\lceil\frac{F+1}{2} \rceil}^F \Sem(F,g)$, where $\lceil\frac{F+1}{2} \rceil = \min\{x \in \Z: x \geq \frac{F+1}{2}\}$.

Based on the results in the previous sections, in Section \ref{sec:5} we provide a more efficient methodology to compute $\Sem(g)$ using the so-called Kunz-coordinates vector of a numerical semigroup. We see how to translate a numerical semigroup to a $0-1$ vector and that the computations over the semigroups to get the set $\Sem(g)$ can be easily done by manipulating these vectors. 

\section{Elementary Numerical Semigroups}
\label{sec:2}

The following result is well-known and appears in \cite{springer}.
\begin{lem}
 \label{lem:1}
If $S$ is a numerical semigroup, then $\g(S) \geq \frac{\F(S)+1}{2}$.
\end{lem}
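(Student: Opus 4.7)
My plan is to exploit the additive closure of $S$ to pair integers in $[1,\F(S)]$ and show that each pair contributes at least one gap.

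The key observation is this: for any integer $x$ with $1\le x\le \F(S)-1$, the pair $x$ and $\F(S)-x$ cannot both lie in $S$. Indeed, if both were in $S$, then by closure under addition we would have $x+(\F(S)-x)=\F(S)\in S$, contradicting the definition of the Frobenius number. So every such unordered pair $\{x,\F(S)-x\}$ contains at least one gap of $S$.

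I would then split into the two parity cases for $\F(S)$ and count. If $\F(S)$ is odd, then $x\neq \F(S)-x$ for every $x\in\{1,\dots,\F(S)-1\}$, so these integers fall into $(\F(S)-1)/2$ disjoint pairs, each supplying at least one gap, and $\F(S)$ itself is an additional gap; this gives $\g(S)\ge (\F(S)-1)/2+1=(\F(S)+1)/2$. If $\F(S)$ is even, the fixed point $x=\F(S)/2$ must itself be a gap (otherwise $\F(S)/2+\F(S)/2=\F(S)\in S$), and the remaining integers pair up into $\F(S)/2-1$ pairs each contributing at least one gap, with $\F(S)$ adding one more, for a total of at least $\F(S)/2+1>(\F(S)+1)/2$.

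There is essentially no obstacle beyond careful bookkeeping of the parity cases; the whole argument rests on the single closure remark in the first paragraph. The only thing to be a little careful about is not double-counting the gap $\F(S)$ and treating the self-paired middle element when $\F(S)$ is even, but both cases yield exactly the required bound $\g(S)\ge (\F(S)+1)/2$.
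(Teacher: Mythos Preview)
Your proof is correct; the pairing argument via $x\mapsto \F(S)-x$ and the observation that $x,\F(S)-x\in S$ would force $\F(S)\in S$ is exactly the standard route, and your parity bookkeeping is clean. Note, however, that the paper does not actually supply its own proof of this lemma: it simply states the result as well-known and cites \cite{springer}, so there is no in-paper argument to compare against. Your argument is essentially the one found in that reference.
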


For any finite set $X$, $\#X$ denotes the cardinal of $X$. If $a_1 < \cdots < a_n$ is a set of integer numbers, then, $\{a_1, \ldots, a_n, \rightarrow\} = \{a_1, \ldots, a_n\} \cup \{z \in \Z: z \geq a_n\}$.

\begin{prop}
 \label{prop:2}
Let $F$ and $g$ two positive integer. Then, the following conditions are equivalent:
\begin{enumerate}
 \item $g \leq F \leq 2g-1$.
\item $\E(F,g) \neq \emptyset$.
\item $\Sem(F,g) \neq \emptyset$.
\end{enumerate}
\end{prop}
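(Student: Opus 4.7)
The plan is to prove $(2) \Rightarrow (3) \Rightarrow (1) \Rightarrow (2)$, since the implication $(2) \Rightarrow (3)$ is immediate from the inclusion $\E(F,g) \subseteq \Sem(F,g)$ and the hard step is constructing an explicit elementary semigroup realizing given parameters.

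For $(3) \Rightarrow (1)$, I would take any $S \in \Sem(F,g)$ and derive both inequalities from basic facts about gaps. The upper bound $F \leq 2g - 1$ is exactly the content of Lemma \ref{lem:1} rewritten ($2g \geq F+1$). The lower bound $g \leq F$ is obtained by noting that every integer strictly greater than $\F(S)=F$ lies in $S$, hence $\N \setminus S \subseteq \{1,2,\ldots,F\}$, which gives $g = \#(\N\setminus S) \leq F$.

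For the main step $(1) \Rightarrow (2)$, my plan is to exhibit an explicit elementary semigroup for every admissible pair. Given $g \leq F \leq 2g-1$, I would define
\[
S \;=\; \{0\} \cup \bigl(\{g, g+1, g+2, \ldots\} \setminus \{F\}\bigr).
\]
The key observation that makes this work is that any two elements $a, b \geq g$ satisfy $a+b \geq 2g \geq F+1 > F$, so sums automatically land in the ``tail'' $\{F+1, F+2, \ldots\} \subseteq S$, giving closure under addition without having to check any nontrivial combinatorics. Once closure is established, I would verify the four properties in turn: (i) $\N \setminus S$ is finite (clear); (ii) $\F(S) = F$, since $F \notin S$ while every integer beyond $F$ lies in $S$; (iii) $\g(S) = g$, since the gaps are precisely $\{1, \ldots, g-1\} \cup \{F\}$, with cardinality $(g-1)+1 = g$ (this uses $F \geq g$ so the two pieces are disjoint); and (iv) $S$ is elementary, because $\m(S) = g$ when $g < F$ and $2g \geq F+1 > F$ by hypothesis, while in the degenerate case $g = F$ the semigroup collapses to $\{0, F+1, \rightarrow\}$ with multiplicity $F+1$, still satisfying $F < 2(F+1)$.

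The main obstacle, conceptually, is just organizing the construction so that the single formula covers the boundary case $g = F$ uniformly; writing the semigroup as $\{0\} \cup (\{g, g+1, \ldots\} \setminus \{F\})$ handles both regimes at once and keeps the elementary condition $F < 2\m(S)$ transparent from the inequality $F \leq 2g-1$.
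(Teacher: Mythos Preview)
Your proof is correct and follows the same logical cycle as the paper: $(2)\Rightarrow(3)$ trivial, $(3)\Rightarrow(1)$ via Lemma~\ref{lem:1} plus the observation that all gaps lie in $\{1,\ldots,F\}$, and $(1)\Rightarrow(2)$ by explicit construction. The only difference is in the construction step. You exhibit one particular elementary semigroup, namely $\{0\}\cup(\{g,g+1,\ldots\}\setminus\{F\})$, whereas the paper argues that \emph{any} subset $A\subseteq\{\lceil\frac{F+1}{2}\rceil,\ldots,F-1\}$ of cardinality $F-g$ gives an elementary semigroup $A\cup\{0\}\cup\{F+1,\rightarrow\}$. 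Your example is the special case $A=\{g,\ldots,F-1\}$ of theirs. Your version is cleaner for the bare purpose of proving $\E(F,g)\neq\emptyset$, but the paper's formulation is doing double duty: it is precisely what is cited to obtain Corollary~\ref{cor:3}, the full description of $\E(F,g)$, and hence the count in Corollary~\ref{cor:4}. So nothing is wrong with your argument, but be aware that the paper's slightly more elaborate construction is load-bearing for what follows.
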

\begin{proof} $ $
 \begin{itemize}
  \item[$(1)\Rightarrow(2)$] Note first that $g \leq F \leq 2g-1$ if and only if $\lceil\frac{F+1}{2} \rceil \leq g \leq F$. Observe also that $\#\{\lceil\frac{F+1}{2} \rceil, \ldots, F-1\}= \lceil\frac{F}{2} \rceil - 1$ and because $\lfloor\frac{F}{2}\rfloor + 1 \leq g$, then $F-g \leq \lceil\frac{F}{2} \rceil-1$.

Let $A$ be a subset of $\{\lceil\frac{F+1}{2} \rceil, \ldots, F-1\}$ with $\#A=F-g$. Since the addition of two elements in $A$ is greater or equal than $F+1$, we easily deduce that $\overline{S} = A \cup \{F+1, \rightarrow\} \cup \{0\}$ is a numerical semigroup. Furthermore, $\F(\overline{S}) = F$ and $\g(\overline{S}) = F - \#A = g$. Hence, $\overline{S} \in \Sem(F,g)$. Actually, $\m(\overline{S}) \geq \lceil\frac{F+1}{2} \rceil$ and consequently, $2\m(\overline{S}) \geq F+1> F$, being then $\overline{S} \in \E(F,g)$.
  \item[$(2)\Rightarrow(3)$] Trivial.
  \item[$(3)\Rightarrow(1)$] Let $S\in \Sem(F,g)$. It is clear that $g \leq F$ and by Lemma \ref{lem:1} we have that $g \geq \frac{F+1}{2}$. Thus, $g \leq F \leq 2g-1$.
 \end{itemize}
\end{proof}

The proof of $(1)\Rightarrow(2)$ in the proposition above is the key to construct $\E(F,g)$.

\begin{cor}
 \label{cor:3}
Let $F$ and $g$ two positive integers such that $g \leq F \leq 2g-1$, and let $\mathcal{A}_{F,g} = \{A: A \subseteq \{ \lceil\frac{F+1}{2} \rceil, \ldots, F-1\} \text{ and } \#A=F-g\}$. Then, $\E(F,g) = \{ A \cup \{F+1, \rightarrow\} \cup \{0\}: A \in \mathcal{A}_{F,g}\}$.
\end{cor}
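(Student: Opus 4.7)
The plan is to prove the equality $\E(F,g) = \{A \cup \{F+1,\rightarrow\} \cup \{0\} : A \in \mathcal{A}_{F,g}\}$ by showing both inclusions separately.

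For the inclusion $\supseteq$ there is essentially nothing new to do: the argument is already contained in the proof of $(1)\Rightarrow(2)$ of Proposition \ref{prop:2}. Given any $A\in\mathcal{A}_{F,g}$, one sets $\overline{S}=A\cup\{F+1,\rightarrow\}\cup\{0\}$. Since every pair of elements of $A$ has sum $\geq 2\lceil(F+1)/2\rceil\geq F+1$, closure under addition holds, so $\overline{S}$ is a numerical semigroup; its Frobenius number is $F$ (because $F\notin\overline{S}$ and $\{F+1,\rightarrow\}\subseteq\overline{S}$), its genus is $F-\#A=g$, and its multiplicity is at least $\lceil(F+1)/2\rceil$, which gives $2\m(\overline{S})\geq F+1>F=\F(\overline{S})$, so $\overline{S}\in\E(F,g)$.

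For the inclusion $\subseteq$, which is the content to be added, I would take an arbitrary $S\in\E(F,g)$ and reconstruct the set $A$. The elementary hypothesis $\F(S)<2\m(S)$ gives $\m(S)>F/2$, that is, $\m(S)\geq\lceil(F+1)/2\rceil$. Therefore no positive integer less than $\lceil(F+1)/2\rceil$ belongs to $S$, and since $\F(S)=F$ we also have $F\notin S$ and $\{F+1,\rightarrow\}\subseteq S$. Setting $A:=S\cap\{\lceil(F+1)/2\rceil,\ldots,F-1\}$, these observations together yield $S=\{0\}\cup A\cup\{F+1,\rightarrow\}$.

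It remains to verify that $A\in\mathcal{A}_{F,g}$. By construction $A\subseteq\{\lceil(F+1)/2\rceil,\ldots,F-1\}$, so only the cardinality condition needs checking. The gaps of $S$ are precisely $\{1,\ldots,F\}\setminus(S\cap\{1,\ldots,F\})=\{1,\ldots,F\}\setminus A$, and counting these gives $g=\g(S)=F-\#A$, so $\#A=F-g$, as required. I do not foresee any real obstacle here; the only point to watch is the clean translation of the elementary condition $\F(S)<2\m(S)$ into the multiplicity lower bound $\m(S)\geq\lceil(F+1)/2\rceil$, which is what forces every element of $S$ below $F$ to live inside the window $\{\lceil(F+1)/2\rceil,\ldots,F-1\}$ and is the sole reason why the reconstruction of $A$ from $S$ works.
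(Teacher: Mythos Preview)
Your proof is correct and follows essentially the same approach as the paper: both inclusions are handled exactly as in the paper's own argument, with the $\supseteq$ direction delegated to the proof of $(1)\Rightarrow(2)$ in Proposition~\ref{prop:2}, and the $\subseteq$ direction obtained from the observation that $\m(S)>\frac{F}{2}$ forces $S$ to have the shape $A\cup\{F+1,\rightarrow\}\cup\{0\}$ with $A\in\mathcal{A}_{F,g}$. You simply spell out in full the details (the reconstruction of $A$, the cardinality count) that the paper leaves implicit in a single sentence.
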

\begin{proof}
Let $A \in \mathcal{A}_{F,g}$. Then, by the proof $(1)\Rightarrow (2)$ in Proposition \ref{prop:2}, we know that $A \cup \{F+1, \rightarrow\} \cup \{0\} \in \E(F,g)$. Let us see the other inclusion. Let $S \in \E(F,g)$. Then, $\m(S)>\frac{F}{2}$ and then $S$ is in the form $A \cup \{F+1, \rightarrow\} \cup \{0\}$ with $A \in \mathcal{A}_{F,g}$.
\end{proof}

As a immediate consequence we have the following result.

\begin{cor}
\label{cor:4}
Let $F$ and $g$ be positive integers such that $g \leq F\leq 2g-1$. Then
$$
\#\E(F,g) = {{\lceil\frac{F}{2} \rceil-1} \choose {F-g}}
$$
\end{cor}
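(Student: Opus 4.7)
The plan is to reduce the counting problem directly to Corollary \ref{cor:3} and then perform an elementary cardinality computation on the indexing interval.

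First I would invoke Corollary \ref{cor:3}, which exhibits an explicit bijection between $\E(F,g)$ and $\mathcal{A}_{F,g}$: the map $A \mapsto A \cup \{F+1,\rightarrow\} \cup \{0\}$ is clearly injective (one recovers $A$ from $S$ as $S \cap \{\lceil (F+1)/2\rceil,\ldots,F-1\}$) and surjective by the corollary. Hence $\#\E(F,g) = \#\mathcal{A}_{F,g}$.

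Next, since $\mathcal{A}_{F,g}$ is by definition the set of subsets of $\{\lceil (F+1)/2 \rceil,\ldots,F-1\}$ of cardinality exactly $F-g$, a standard binomial count gives
$$\#\mathcal{A}_{F,g} = \binom{\#\{\lceil (F+1)/2 \rceil,\ldots,F-1\}}{F-g}.$$

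The remaining task is to verify $\#\{\lceil (F+1)/2 \rceil,\ldots,F-1\} = \lceil F/2 \rceil - 1$. I would do this by parity: if $F$ is even then $\lceil (F+1)/2 \rceil = F/2 + 1$, so the interval has $F - 1 - (F/2+1) + 1 = F/2 - 1 = \lceil F/2 \rceil - 1$ elements; if $F$ is odd then $\lceil (F+1)/2 \rceil = (F+1)/2$, so the interval has $F - 1 - (F+1)/2 + 1 = (F-1)/2 = \lceil F/2\rceil - 1$ elements (using $\lceil F/2 \rceil = (F+1)/2$ for odd $F$). Substituting yields the claimed formula.

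There is no genuine obstacle here; the content is entirely in Corollary \ref{cor:3}, and the only mildly delicate point is the ceiling identity, which one should handle by splitting on the parity of $F$ rather than by manipulating ceilings symbolically.
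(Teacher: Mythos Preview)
Your proposal is correct and follows exactly the route the paper intends: the paper states Corollary~\ref{cor:4} as an immediate consequence of Corollary~\ref{cor:3} (having already noted in the proof of Proposition~\ref{prop:2} that $\#\{\lceil\frac{F+1}{2}\rceil,\ldots,F-1\}=\lceil\frac{F}{2}\rceil-1$), and you simply spell out the bijection and the cardinality computation explicitly. There is nothing to add or correct.
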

We illustrate the above results with the following example.
\begin{ex}
\label{ex:5}
Let us construct the set of elementary numerical semigroups with Frobenius number $F=7$ and genus $g=5$. By Proposition \ref{prop:2} it is guaranteed that $\Sem(7,5)\neq \emptyset$ and by Corollary \ref{cor:4} $\#\E(7,5)={{4-1}\choose {7-5}}={3 \choose 2}=3$. Since $\{\lceil\frac{F+1}{2} \rceil, \ldots, F-1\} = \{4,5,6\}$, then, $\mathcal{A}_{7,5}$ described in Corollary \ref{cor:3} is $\mathcal{A}_{7,5}=\{A \subseteq \{4,5,6\}: \#A=2\} = \{\{4,5\}, \{4,6\}, \{5,6\}\}$ and by applying Corollary \ref{cor:3} we get that $\E(7,5) = \left\{\{0,4,5,8,\rightarrow\}, \{0,4,6,8, \rightarrow\}, \{0,5,6,8,\rightarrow\}\right\}$.
\end{ex}

\begin{prop}
\label{prop:6}
Let $F$ and $g$ be positive integers such that $g \leq F \leq 2g-1$. The correspondence $\theta: \Sem(F,g) \rightarrow \E(F,g)$ defined as $\theta(S)=\left( S \backslash \left\{x \in S\backslash\{0\}: x<\frac{F}{2}\right\}\right) \cup \{F-x: x \in S\backslash \{0\} \text{ and } x<\frac{F}{2}\}$ is a surjective application. Moreover, $\theta(S)=S$ if and only if $S\in \E(F,g)$.
\end{prop}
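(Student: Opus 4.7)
The plan is to check three things: that $\theta$ is well-defined (i.e.\ $\theta(S)\in\E(F,g)$), that it is surjective, and finally the characterization $\theta(S)=S\iff S\in\E(F,g)$. Write $B=\{x\in S\setminus\{0\}:x<F/2\}$ and $B'=\{F-x:x\in B\}$, so $\theta(S)=(S\setminus B)\cup B'$. A preliminary observation I will use repeatedly is that if $x\in S\setminus\{0\}$ then $F-x\notin S$, because otherwise $F=x+(F-x)\in S$, contradicting $F=\F(S)$; in particular $B\cap B'=\emptyset$, $B'\cap S=\emptyset$, and (taking $x=F/2$ when $F$ is even) $F/2\notin S$.

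The main technical step, and the one I expect to be the most delicate, is verifying that $\theta(S)$ is a numerical semigroup, i.e.\ checking closure under addition by cases on whether the summands lie in $S\setminus B$ or in $B'$. Every nonzero element of $S\setminus B$ is $>F/2$ (strictly, by the observation above), and every element of $B'$ is $>F/2$. Consequently, given $a,b$ nonzero in $\theta(S)$, the sum satisfies $a+b>F$, so $a+b\in S$ (since every integer exceeding $F$ lies in $S$) and $a+b\notin B$ (as $a+b>F/2$); hence $a+b\in S\setminus B\subseteq\theta(S)$. Containment of $0$ and cofiniteness in $\N$ are immediate.

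Next I would verify that $\F(\theta(S))=F$ and $\g(\theta(S))=g$. The Frobenius number is preserved because $F\notin\theta(S)$ (as $F\notin S$, $F\notin B$, and $F-x\neq F$ for $x\in B$) while $\{F+1,\to\}\subseteq S\setminus B\subseteq\theta(S)$. For the genus, the gaps of $\theta(S)$ differ from those of $S$ exactly by removing the $|B'|$ elements of $B'$ and adding the $|B|$ elements of $B$; since the map $x\mapsto F-x$ is a bijection $B\to B'$, the two cardinalities agree and $\g(\theta(S))=\g(S)=g$. Finally, $\m(\theta(S))>F/2$ since every positive element of $\theta(S)$ exceeds $F/2$, so $2\m(\theta(S))>F=\F(\theta(S))$ and $\theta(S)\in\E(F,g)$.

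For surjectivity, I claim $\theta$ restricted to $\E(F,g)$ is the identity: if $S\in\E(F,g)$ then $\m(S)>F/2$, so $B=\emptyset$, so $B'=\emptyset$, and $\theta(S)=S$. In particular every $T\in\E(F,g)$ is in the image, so $\theta$ is surjective, and moreover $S\in\E(F,g)\Rightarrow\theta(S)=S$. Conversely, suppose $\theta(S)=S$. The equality $(S\setminus B)\cup B'=S$ forces $B'\subseteq S$; but $B'\cap S=\emptyset$ by the preliminary observation, hence $B'=\emptyset$, hence $B=\emptyset$. Therefore $S$ has no positive element below $F/2$, which, combined with $F/2\notin S$, gives $\m(S)>F/2$ and thus $S\in\E(F,g)$.
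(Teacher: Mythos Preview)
Your proof is correct and follows essentially the same approach as the paper. The paper's argument is much terser: it observes that $\theta(S)$ has the shape $A\cup\{F+1,\to\}\cup\{0\}$ with $A\subseteq\{\lceil\tfrac{F+1}{2}\rceil,\ldots,F-1\}$ and $\#A=F-g$, and then invokes Corollary~\ref{cor:3} to conclude $\theta(S)\in\E(F,g)$ in one stroke, whereas you unpack each defining property (closure, Frobenius number, genus, multiplicity bound) directly; both routes rest on the same underlying observation that every nonzero element of $\theta(S)$ exceeds $F/2$.
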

\begin{proof}
It is clear that $\theta(S)$ is in the form $A \cup \{F+1, \rightarrow\} \cup \{0\}$ where $A$ is a subset of $\{\lceil\frac{F+1}{2} \rceil, \ldots, F-1\}$ with cardinality $F-g$. Hence, by Corollary \ref{cor:3} we have that $\theta(S)\in \E(F,g)$ and then $\theta$ is a well defined application. It is also clear that $\theta(S)=S$ if and only if $S$ does not contain any positive integer smaller that $\frac{F}{2}$, which is equivalent to $S \in \E(F,g)$. Finally, as a consequence of the last statement, we have that $\theta$ is surjective.
\end{proof}
We define over the set $\Sem(F,g)$ the following equivalence relation:
$$
SRS' \text{ if and only if } \theta(S)=\theta(S').
$$
For a given $S \in \Sem(F,g)$ we denote by $[S]=\{S'\in \Sem(F,g): SRS'\}$. Then, the quotient set $\frac{\Sem(F,g)}{R} = \{[S]: S \in \Sem(F,g)\}$ is a partition of $\Sem(F,g)$. As an immediate consequence of Proposition \ref{prop:6} we get the following result.
\begin{cor}
\label{cor:7}
Let $F$ and $g$ be positive integers such that $g \leq F \leq 2g-1$. Then $\frac{\Sem(F,g)}{R} = \{[S]: S \in \E(F,g)\}$. Furthermore, if $S, S' \in \E(F,g)$ and $S\neq S'$, then $[S] \cap [S']=\emptyset$.
\end{cor}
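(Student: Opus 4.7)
The plan is to use Proposition \ref{prop:6} as a black box: the map $\theta$ both hits every elementary semigroup and fixes them pointwise, which pins down a canonical representative in each $R$-class.

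First I would establish the containment $\frac{\Sem(F,g)}{R} \subseteq \{[S]: S \in \E(F,g)\}$. Given any $S \in \Sem(F,g)$, Proposition \ref{prop:6} says $\theta(S) \in \E(F,g)$, and by definition of $R$ we have $S \, R \, \theta(S)$ (since $\theta(\theta(S)) = \theta(S)$, using that $\theta$ fixes elementary semigroups). Therefore $[S] = [\theta(S)]$, which exhibits an elementary representative. The reverse containment is immediate because $\E(F,g) \subseteq \Sem(F,g)$.

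Next, for the disjointness statement, I would argue by contraposition. Suppose $S, S' \in \E(F,g)$ and $[S] \cap [S'] \neq \emptyset$; pick $T$ in the intersection. Then $T \, R \, S$ and $T \, R \, S'$ give $\theta(S) = \theta(T) = \theta(S')$. But Proposition \ref{prop:6} tells us that $\theta$ fixes elementary semigroups, so $\theta(S) = S$ and $\theta(S') = S'$, forcing $S = S'$.

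There is no real obstacle here: the corollary is essentially a bookkeeping consequence of the fact that $\theta$ is a retraction from $\Sem(F,g)$ onto $\E(F,g)$, so each fiber (equivalently, each $R$-class) meets $\E(F,g)$ in exactly one point. The only thing to be careful about is invoking both halves of Proposition \ref{prop:6}: surjectivity (to ensure every elementary semigroup appears as some $\theta(S)$, though this is not strictly needed here) and the fixed-point characterization (which is what makes the representative unique).
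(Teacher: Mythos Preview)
Your proof is correct and matches the paper's treatment: the paper states Corollary~\ref{cor:7} as an immediate consequence of Proposition~\ref{prop:6} without writing out a proof, and the details you supply (that $\theta$ is a retraction onto $\E(F,g)$, hence each $R$-class contains a unique elementary representative) are exactly the routine verification implicit in that remark.
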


Let $g$ be a positive integer and $X=\{F \in \N\backslash\{0\}: g \leq F \leq 2g-1\}$. Note that by Proposition \ref{prop:2} we know that $\Sem(g) = \bigcup_{F\in X} \Sem(F,g)$ and by Corollary \ref{cor:7} that $\Sem(F,g) = \bigcup_{S\in \E(F,g)} [S]$. Corollary \ref{cor:3} gives us an algorithmic procedure to compute $\E(F,g)$. Hence, to compute $\Sem(F,g)$ it is enough to provide an algorithm to compute the elements in $[S]$ for any $S \in \E(F,g)$. The next section will give us the answer to this question.

\section{The tree associated to an elementary numerical semigroup}
\label{sec:3}
In this section we define a tree structure for $[S]$, for any $S\in \E(F,g)$ and any positive integers $F$ and $g$. The use of this tree leads us to describe a method for enumerating the elements of $[S]$.

A (directed) graph $G$ is a pair $(V,E)$ where $V$ is a nonempty set whose elements are called vertices and $E$ is a subset of $\{(v,w) \in V\times V: v\neq w\}$. The elements in $E$ are called edges. A path connecting two vertices $v$ and $w$ of $G$ is a finite sequence of different edges in the form $(v_0, v_1), (v_1, v_2), \ldots, (v_{n-1}, v_n)$ with $v_0=v$ and $v_n=w$.

A graph $G$ is said a tree if there exists a vertex $r$ (called the root of $G$) such that for any other vertex of $G$, $v$, there is an unique path connecting $v$ and $r$. If $(v,w)$ is an edge of the tree we say that $v$ is a child of $w$.

Let $S \in \E(F,g)$. Our first goal in this section is to construct a tree whose set of vertices is $[S]$ and its root is $S$.

\begin{lem}
\label{lem:8}
Let $S$ be a numerical semigroup such that $\F(S)>2\m(S)$. Then, $\overline{S}=\left(S\backslash\{\m(S)\}\right) \cup \{\F(S)-\m(S)\}$ is a numerical semigroup with $\F(\overline{S})=\F(S)$, $\g(\overline{S})=\g(S)$ and $\m(\overline{S})>\m(S)$. Moreover, $\overline{S} \in [S]$.
\end{lem}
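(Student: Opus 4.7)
The plan is to verify the four properties in sequence, the semigroup axioms being the bulk of the work, and then to check that $\theta(\overline{S})=\theta(S)$ so that membership in $[S]$ follows from the definition of $R$.

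First I would establish the basic observation that $\F(S)-\m(S)\notin S$: otherwise adding $\m(S)$ would place $\F(S)$ in $S$, a contradiction. Since $\F(S)-\m(S)$ is also strictly less than $\F(S)$ and strictly greater than $\m(S)$ (by hypothesis $\F(S)>2\m(S)$), the element being inserted is genuinely a gap of $S$ and is distinct from $\m(S)$. From here, $\g(\overline{S})=\g(S)$ is immediate (we exchange one element for one gap), the fact that every integer above $\F(S)$ still lies in $\overline{S}$ gives $\N\setminus\overline{S}$ finite and $\F(\overline{S})=\F(S)$, and $\m(\overline{S})>\m(S)$ holds because every positive element of $\overline{S}$ is either a positive element of $S$ different from $\m(S)$ (hence strictly larger) or is $\F(S)-\m(S)>\m(S)$.

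Closure under addition is the most delicate part and the step I expect to take the most care. I would split into three cases for $a,b\in\overline{S}\setminus\{0\}$: both in $S\setminus\{\m(S)\}$; exactly one equal to $\F(S)-\m(S)$; and both equal to $\F(S)-\m(S)$. In the first case $a,b>\m(S)$ forces $a+b>2\m(S)\ge \m(S)+1$, and since the sum lies in $S$ it lies in $S\setminus\{\m(S)\}\subseteq\overline{S}$. In the remaining two cases the hypothesis $\F(S)>2\m(S)$ is the engine: a sum of the form $(\F(S)-\m(S))+b$ with $b>\m(S)$ exceeds $\F(S)$ and so lies in $S$ (and is $>\m(S)$), while $2(\F(S)-\m(S))>\F(S)$ for the same reason. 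Adding $0$ to any element is trivial.

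Finally, to see $\overline{S}\in[S]$ I would compute $\theta(\overline{S})$ directly from the definition. Writing $T_S=\{x\in S\setminus\{0\}:x<\F(S)/2\}$, the hypothesis $\F(S)>2\m(S)$ means $\m(S)\in T_S$, and the new element $\F(S)-\m(S)$ satisfies $\F(S)-\m(S)>\F(S)/2$, so it is \emph{not} moved by $\theta$. Consequently $T_{\overline{S}}=T_S\setminus\{\m(S)\}$, and unwinding the definition gives
\[
\theta(\overline{S}) = (S\setminus T_S)\cup\{\F(S)-\m(S)\}\cup\{\F(S)-x: x\in T_S\setminus\{\m(S)\}\} = (S\setminus T_S)\cup\{\F(S)-x: x\in T_S\} = \theta(S),
\]
which by definition of $R$ means $\overline{S}R\,S$, i.e. $\overline{S}\in[S]$.
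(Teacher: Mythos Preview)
Your proposal is correct and follows essentially the same route as the paper's proof: the same three-case split for closure under addition, the same one-line observations for $\F$, $\g$, and $\m$, and the claim $\theta(\overline{S})=\theta(S)$ for the final assertion. You are slightly more explicit than the paper in two places---you spell out why $\F(S)-\m(S)\notin S$ and you unwind the $\theta$ computation rather than declaring it ``clear''---but there is no substantive difference in approach.
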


\begin{proof}
$\overline{S}$ is a semigroup as a immediate consequence of the following items:
\begin{itemize}
\item The addition of two elements in $S\backslash\{\m(S)\}$ belongs to $S\backslash\{\m(S)\}$.
\item $2(\F(S)-\m(S)) = \F(S) + (\F(S)-\m(S)) > \F(S)$ and then, $2(\F(S)-\m(S))\in S\backslash\{\m(S)\}$.
\item If $s\in S\backslash\{\m(S), 0\}$, then $s>\m(S)$. Hence, $\F(S)-\m(S)+s >\F(S)$ and $\F(S)-\m(S)+s \in S\backslash\{\m(S)\}$.
\end{itemize}

By definition of $\overline{S}$, we deduce that $\g(\overline{S})=\g(S)$ and $\F(\overline{S})=\F(S)$. Moreover, since $\F(S)-\m(S)>\m(S)$ we have that $\m(\overline{S})>\m(S)$.

Finally, it is clear that $\theta(\overline{S})=\theta(S)$ and then $SR\overline{S}$.
\end{proof}

The above lemma allows us to give the following definition. Let $S \in \Sem(F,g)$. We define the graph $\G([S])$ as follows: The set of vertices is $[S]$ and $(P,Q) \in [S] \times [S]$ is an edge if $F>2\m(P)$ and $Q=\left(P\backslash \{\m(P)\}\right) \cup \{F-\m(P)\}$.

\begin{prop}
\label{prop:9}
Let $S \in \E(F,g)$. Then, $\G([S])$ is a tree with root $S$.
\end{prop}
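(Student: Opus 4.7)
The plan is to show that every vertex of $\G([S])$ has at most one outgoing edge and that following outgoing edges from any starting vertex terminates at $S$; this is precisely what is needed for a directed tree rooted at $S$ in the sense of the definition above.

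First I would verify that each vertex has at most one outgoing edge. From a given $P \in [S]$ with $F > 2\m(P)$, the formula $Q = (P \setminus \{\m(P)\}) \cup \{F-\m(P)\}$ uniquely determines $Q$, and Lemma \ref{lem:8} confirms $Q \in [S]$, so there is exactly one edge leaving $P$. Moreover, the only $P \in [S]$ with no outgoing edge is $S$ itself: if $F < 2\m(P)$ then $P \in \E(F,g)$ and Corollary \ref{cor:7} forces $P = S$; the remaining case $F = 2\m(P)$ is excluded because $2\m(P) = \m(P) + \m(P) \in P$ while $F \notin P$.

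For existence and uniqueness of a path from an arbitrary $P \in [S]$ to $S$, I would iterate the edge map $\phi\colon [S]\setminus\{S\}\to [S]$ given by $\phi(P) = (P\setminus\{\m(P)\})\cup\{F-\m(P)\}$. Lemma \ref{lem:8} guarantees $\m(\phi(P)) > \m(P)$, so the multiplicities $\m(P), \m(\phi(P)), \m(\phi^2(P)), \ldots$ form a strictly increasing sequence of positive integers. It is bounded above, since every element of $\Sem(F,g)$ contains $F+1$ and hence has multiplicity at most $F+1$; the iteration therefore halts after finitely many steps at a vertex with no outgoing edge, which by the previous paragraph must be $S$. This produces a directed path from $P$ to $S$, and uniqueness is immediate: any such path must begin with $(P,\phi(P))$, then $(\phi(P),\phi^2(P))$, and so on, so it coincides with the iteration. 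I do not foresee any substantive obstacle beyond the small observation ruling out $F = 2\m(P)$.
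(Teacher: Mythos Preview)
Your argument is correct and follows essentially the same route as the paper: iterate the map $P\mapsto (P\setminus\{\m(P)\})\cup\{F-\m(P)\}$, use Lemma~\ref{lem:8} to see multiplicities strictly increase so the process terminates at an elementary semigroup, and invoke Corollary~\ref{cor:7} to identify that terminal vertex with $S$; uniqueness of the path follows because each vertex has a uniquely determined outgoing edge. Your explicit exclusion of the case $F=2\m(P)$ and the bound $\m(P)\le F+1$ are details the paper leaves implicit in its ``it is clear that $k$ exists'', but they do not change the structure of the proof.
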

\begin{proof}
Let $P \in [S]$. By applying iteratively Lemma \ref{lem:8} we have the following sequence of elements in $[S]$:
\begin{itemize}
\item[] $P_0=P$,
\item[] $P_{n+1} = \left\{\begin{array}{cl}
\left(P_n\backslash \{\m(P_n)\}\right) \cup \{F-\m(P_n)\} & \mbox{ if $F>2\m(P_n)$,}\\
P_n & \mbox{ otherwise}
\end{array}\right.$
\end{itemize}
It is clear that $k = \min \{\ell \in \N: P_\ell \in \E(F,g)\}$ exists. By applying Corollary \ref{cor:7} we have that $P_k=S$. Therefore, $(P_0, P_1), (P_1, P_2), \ldots, (P_{k-1}, P_k)$ is the unique path connecting $P$ and $S$. Consequently, $\G([S])$ is a tree with root $S$.
\end{proof}

Observe that by Corollary \ref{cor:7}, Proposition \ref{prop:9} and that $\Sem(F,g) = \bigcup_{S \in \E(F,g)} [S]$, we have that, with the above graph structure, $\Sem(F,g)$ is a forest, i.e., a disjoin union of trees. Hence, $\Sem(g)$ is also a forest with this construction.

Note that to construct $[S]$ it is enough, by the proposition above, to provide an algorithm to determine the children of any $Q\in [S]$ in $\G([S])$. This is the next goal in this section. First we recall some definitions.

Let $A$ be a nonempty subset in $\N$. We denote by $\langle A \rangle$ the submonoid of $(\N, +)$ generated by $A$, that is, $\langle A \rangle = \{\lambda_1a_1 + \cdots + \lambda_na_n: n\in \N \backslash \{0\}, a_1, \ldots, a_n \in A, \text{ and } \lambda_1, \ldots, \lambda_n \in \N\}$. It is well-known (see for instance \cite{springer}) that $\langle A \rangle$ is a numerical semigroup if and only if $\gcd(A)=1$ (here $\gcd$ stands for the greatest common divisor). If $S$ is a numerical semigroup and $S=\langle A \rangle$, then we say that $A$ is a system of generators of $S$. If there not exists any other proper subset of $A$ generating $S$, we say that $A$ is a \emph{minimal system of generators} of $S$. Every numerical semigroup has an unique minimal system of generators and such a system is finite (see \cite{springer}). If $S$ is a numerical semigroup, the elements in a minimal system of generators of $S$ are called the \textit{minimal generators} of $S$.

 The following result is well-known (see for instance \cite{springer}).

 \begin{lem}
 \label{lem:10}
 Let $S$ be a numerical semigroup.
 \begin{enumerate}
 \item The minimal generators of $S$ are the elements in $S\backslash\{0\}$ that cannot be expressed as a sum of two elements in $S\backslash\{0\}$.
 \item Let $x\in S$. Then, $x$ is a minimal generator of $S$ if and only if $S\backslash \{x\}$ is a numerical semigroup.
 \end{enumerate}
 \end{lem}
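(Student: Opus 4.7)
The plan is to prove part $(1)$ directly by identifying a candidate set with the (known-to-exist unique) minimal system of generators, and then to derive part $(2)$ from $(1)$ using the closure-under-addition property.

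For $(1)$, define
\[
B = \{x \in S\setminus\{0\} : x \neq a+b \text{ for all } a,b \in S\setminus\{0\}\}.
\]
I would show $B$ is the minimal system of generators in two stages. First, $B$ generates $S$: by strong induction on $x\in S\setminus\{0\}$, either $x\in B$ (done), or $x=a+b$ with $a,b\in S\setminus\{0\}$, and then $a,b<x$ so by the induction hypothesis both are expressible as $\N$-combinations of elements of $B$, hence so is $x$. Second, every generating set of $S$ contains $B$: if $S=\langle A\rangle$ and $x\in B$, then $x=\lambda_1 a_1+\cdots+\lambda_n a_n$ with $a_i\in A$ and $\lambda_i\in\N$; indecomposability of $x$ in $S\setminus\{0\}$ forces exactly one $\lambda_i=1$ and all others to vanish, so $x=a_i\in A$. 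Combining both observations with the cited uniqueness of the minimal system of generators yields that $B$ coincides with this minimal system, which is precisely the content of $(1)$.

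For $(2)$, I would use $(1)$ as the characterization of minimal generators. Assume first that $x$ is a minimal generator. The set $S\setminus\{x\}$ contains $0$ and its complement in $\N$ is $(\N\setminus S)\cup\{x\}$, still finite. For closure, take $a,b\in S\setminus\{x\}$; then $a+b\in S$, and $a+b=x$ would either force $a=0$ or $b=0$ (contradicting that the nonzero one equals $x$), or else $a,b\in S\setminus\{0\}$ with $a+b=x$, contradicting $(1)$. Conversely, if $x$ is not a minimal generator, then by $(1)$ one can write $x=a+b$ with $a,b\in S\setminus\{0\}$; since $a,b<x$ we have $a,b\in S\setminus\{x\}$, but $a+b=x\notin S\setminus\{x\}$, so $S\setminus\{x\}$ is not a semigroup.

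The only delicate point is the induction in $(1)$: it relies on the well-ordering of $S\setminus\{0\}\subseteq\N$ so that any decomposition $x=a+b$ with $a,b\neq 0$ has both summands strictly smaller than $x$. Once that is noted, the remaining arguments are straightforward set-theoretic checks, and the equivalence in $(2)$ follows cleanly from $(1)$.
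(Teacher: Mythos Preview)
The paper does not supply its own proof of this lemma; it is stated as well-known with a reference to \cite{springer}. Your argument is the standard one and is correct, so there is nothing to compare against.

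The only pedantic omission is the case $x=0$ in part~(2): your converse argument invokes a decomposition $x=a+b$ with $a,b\in S\setminus\{0\}$, which presumes $x\in S\setminus\{0\}$. The case $x=0$ is of course trivial (by~(1) minimal generators lie in $S\setminus\{0\}$, so $0$ is not one, and $S\setminus\{0\}$ is not a numerical semigroup because it lacks $0$), but strictly speaking it falls outside the argument you wrote.
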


 Let $S$ be a numerical semigroup. Following the notation in \cite{jpaa}, an element $x \in \Z\backslash S$ is a \textit{pseudo-Frobenius number} of $S$ if $x+s\in S$ for all $s\in S\backslash\{0\}$. We denote by $\PF(S)$ the set of pseudo-Frobenius numbers of $S$. Note that $\#\PF(S)$ is an important invariant of $S$ called the \textit{type} of $S$ (see for instance \cite{barucci}).

 \begin{prop}
 \label{prop:11}
 Let $S \in \E(F,g)$ and $Q\in [S]$. Then, $P$ is a child of $Q$ in $\G([S])$ if and only if $P=\left(Q\backslash\{x\}\right) \cup \{F-x\}$, where $x$ is a minimal generator of $Q$ verifying the following conditions:
 \begin{enumerate}
 \item\label{11:1} $\frac{F}{2} <x<F$,
 \item\label{11:2} $F-x < \m(Q)$,
 \item\label{11:3} $2F\neq 3x$,
 \item\label{11:4} $2(F-x)$ is a minimal generator of $Q$,
 \item\label{11:5} $F-x\in \PF(Q\backslash\{x\})$.
 \end{enumerate}
 \end{prop}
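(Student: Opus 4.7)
I will unpack the definition of an edge in $\G([S])$. Suppose $(P, Q)$ is an edge, set $m := \m(P)$ and $x := F - m$. By definition $F > 2m$ and $Q = (P \setminus \{m\}) \cup \{x\}$, so $P = (Q \setminus \{x\}) \cup \{F-x\}$ and $x$ is the element the statement refers to. Condition (1) is immediate from $0 < m < F/2$. Condition (2) follows because $\m(Q) = \min((P \setminus \{0, m\}) \cup \{x\})$ and every entry on the right exceeds $m$. Since $P$ is a semigroup containing $m$, also $2m \in P$, and the inequalities $m < 2m < F$ force $2m \in Q \setminus \{x\}$, yielding (3). Any decomposition $2m = a + b$ with $a, b \in Q \setminus \{0\}$ would force $a, b \geq \m(Q) > m$ by (2), hence $a + b > 2m$, so $2m$ is in fact a minimal generator of $Q$, giving (4). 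To reach (5) I first check, via Lemma~\ref{lem:10}, that $x$ itself is a minimal generator of $Q$: else $x = a + b$ with $a, b \in Q \setminus \{0, x\} \subseteq P$ would put $x \in P$, absurd. Consequently $Q \setminus \{x\}$ is a semigroup, $m \notin Q$ by (2), and $m + s \in P$ with $m + s > m$ gives $m + s \in Q \setminus \{x\}$ for every $s \in Q \setminus \{0, x\}$, hence (5).

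\textbf{Backward direction.} Assume $x$ is a minimal generator of $Q$ satisfying (1)--(5); set $m := F - x$ and $P := (Q \setminus \{x\}) \cup \{m\}$. Closure of $P$ under addition splits into three cases: two elements of $Q \setminus \{0, x\}$ sum to an element of $Q$ that cannot equal $x$ by the minimality of $x$; a sum $m + s$ with $s \in Q \setminus \{0, x\}$ lies in $Q \setminus \{x\}$ by (5); and $2m$ lies in $Q \setminus \{x\}$ by (4) together with (3). Thus $P$ is a numerical semigroup. The identities $\m(P) = m$ and $F > 2\m(P)$ follow from (1)--(2), $\F(P) = F$ from $F \notin Q$ and $F > m$, and $\g(P) = g$ from a routine count using $m \notin Q$ and $x \neq m$. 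The remaining item, $P \in [S]$, reduces to $\theta(P) = \theta(Q)$: outside $(0, F/2)$ the sets $P$ and $Q$ agree except that $P$ lacks $x$, while inside $(0, F/2)$ the set $P$ is $Q \cap (0, F/2)$ together with the extra element $m$ (disjointly, since $m \notin Q$); applying $\theta$ restores $x = F - m$ on the image side, so $\theta(P) = \theta(Q)$.

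\textbf{Main obstacle.} The delicate point is the apparent asymmetry of condition (4): it demands \emph{minimality} of $2(F-x)$ in $Q$, whereas closure of $P$ in the backward direction uses only $2(F-x) \in Q \setminus \{x\}$. The gap is closed for free by condition (2), as the small computation in the forward direction shows; one just has to remember to invoke (2) there, otherwise the proposition looks as though it could be weakened. A related care point is to establish that $x$ is a minimal generator of $Q$ before invoking $\PF(Q \setminus \{x\})$ in (5), since the pseudo-Frobenius set is only defined once $Q \setminus \{x\}$ is a numerical semigroup (Lemma~\ref{lem:10}).
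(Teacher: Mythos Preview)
Your proof is correct and follows essentially the same route as the paper's: both directions hinge on the identity $Q\setminus\{x\}=P\setminus\{\m(P)\}$ and on checking closure of $P$ via conditions \eqref{11:3}--\eqref{11:5}, with \eqref{11:1}--\eqref{11:2} pinning down $\m(P)$ and membership in $[S]$. The only cosmetic difference is your verification of \eqref{11:3}: you obtain $2m\in Q\setminus\{x\}$ from $2m\in P\setminus\{m\}=Q\setminus\{x\}$ (the inequality $2m<F$ is not actually used there), whereas the paper argues directly that $2F=3x$ would force $F=3\m(P)\in P$, contradicting $\F(P)=F$.
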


 \begin{proof}
 {\rm (Sufficiency)} Let us see first that $P=\left(Q \backslash \{x\}\right) \cup \{F-x\}$ is a numerical semigroup. By Lemma \ref{lem:10} we know that the addition of two elements in $Q\backslash\{x\}$ is an element in $Q\backslash\{x\}$. By condition \eqref{11:4} we have that $2(F-x)\in Q$ and by \eqref{11:3} that $2(F-x)\neq x$. Hence, $2(F-x)\in Q\backslash\{x\}$. Finally, by \eqref{11:5} we know that $F-x+s \in Q\backslash\{x\}$ for all $s \in Q\backslash\{x,0\}$.

 From \eqref{11:1} we deduce that $F-x<\frac{F}{2}$, so $\theta(P)=\theta(Q)$. Thus, $P \in [Q]=[S]$.

 By $\eqref{11:2}$ we have that $F-x < \m(Q)$ and then $F-x = \m(P)$. Therefore, $Q=\left(P\backslash\{\m(P)\}\right) \cup \{F-\m(P)\}$. Furthermore, since $x>\frac{F}{2}$ we have that $F>2(F-x)=2\m(P)$. Then, $P$ is a child of $Q$.

  {\rm (Necessity)} Let $P$ be a child of $Q$. Then $Q=\left(P\backslash\{\m(P)\}\right) \cup \{F-\m(P)\}$ and $F>2\m(P)$. Hence, $P=\left(Q\backslash\{F-\m(P)\}\right) \cup \{F-(F-\m(P))\}$. To conclude the proof we need to see that $F-\m(P)$ is a minimal generator of $Q$ verifying the conditions \eqref{11:1}-\eqref{11:5}.

  Since $F-\m(P) \not\in P$ and $Q=\left(P\backslash\{\m(P)\}\right) \cup \{F-\m(P)\}$ then by Lemma \ref{lem:10} we deduce that $F-\m(P)$ is a minimal generator of $Q$. Let us see the conditions:
  \begin{enumerate}
  \item Since $0<\m(P)<\frac{F}{2}$, then $\frac{F}{2}<F-\m(P)<F$.
  \item\label{11:2a} $F-(F-\m(P)) = \m(P)<\m(Q)$ since $Q=\left(P\backslash\{\m(P)\}\right) \cup \{F-\m(P)\}$ and $F-\m(P)>\m(P)$.
  \item Suppose that $2F= 3(F-\m(P))$, then $F=3\m(P) \in P$ contradicting that $F$ is the Frobenius number of $P$.
  \item $2(F-(F-\m(P))= 2\m(P)\in Q$. Furthermore, by \eqref{11:2a} we know that $\m(P)<\m(Q)$. By applying Lemma \ref{lem:10} we deduce that $2\m(P)$ is a minimal generator of $Q$.
  \item $F-(F-\m(P)) = \m(P)\not\in Q$. Since $P\backslash \{\m(P)\} = Q\backslash\{F-\m(P)\}$ and clearly $\m(P)\in \PF(P\backslash\{\m(P)\}$, we have that $\m(P) \in \PF(Q\backslash\{F-\m(P)\}$.
  \end{enumerate}
 \end{proof}

 To conclude this section we illustrate the above results in the following example.
 \begin{ex}
 \label{ex:12}
 Let us construct the set of numerical semigroups with Frobenius number $7$ and genus $5$. By Example \ref{ex:5} we know that $\E(7,5)=\{\langle4,5,11\rangle, \langle 4,6,9,11\rangle, \langle 5,6,8,9\rangle\}$. By Corollary \ref{cor:7} we have that $\Sem(7,5)=[\langle4,5,11\rangle] \cup [\langle 4,6,9,11\rangle] \cup [\langle 5,6,8,9\rangle]$.

 Now, by applying Proposition \ref{prop:11} we deduce that $\langle 4,5,11\rangle$ and $\langle 5,6,8,9\rangle$ have no children. Hence, $[\langle 4,5,11\rangle] = \{\langle 4,5,11\rangle\}$ and $[\langle 5,6,8,9\rangle] = \{\langle 5,6,8,9\rangle\}$. $\langle 4,6,9, 11\rangle$ has only one children $\left( \langle 4,6,9,11\rangle \backslash \{4\}\right) \cup \{3\} = \langle 3,8,10\rangle$. By applying again Proposition \ref{prop:11} we get that $\langle 3,8,10\rangle$ has no children, so $[\langle 4,6,9,11\rangle] = \{\langle 4,6,9,11\rangle, \langle 3,8,10\rangle\}$.

 Then, $\Sem(7,5) = \{\langle4,5,11\rangle, \langle 4,6,9,11\rangle, \langle 3,8,10\rangle, \langle 5,6,8,9\rangle\}$.
In Figure \ref{fig:ex1} we show the forest structure of $\Sem(7,5)$.

 \begin{figure}[h]
\framebox{
$$
\xymatrix{\langle 4,5,11\rangle & & \langle 5,6,8,9\rangle\\
& \langle 4,6,9,11 \rangle &\\
& \langle 3,8, 10 \rangle \ar[u]&}
$$}
\caption{Forest structure of $\Kunz(7,5)$.\label{fig:ex1}}
 \end{figure}

 \end{ex}
\section{Irreducible numerical semigroups}
\label{sec:4}
A numerical semigroup is said \textit{irreducible} if it cannot be expressed as an intersection of two numerical semigroups containing it properly. This notion was introduced in \cite{pacific} where it is proven that a numerical semigroup $S$ is irreducible if and only if $S$ is maximal (with respect to the inclusion ordering) in the set of numerical semigroups with Frobenius number $\F(S)$. From \cite{barucci} and \cite{froberg} it is deduced that the family of irreducible numerical semigroups is the union of two families of numerical semigroups with special importance in this theory: symmetric and pseudo-symmetric numerical semigroups. In the literature there are many characterization of this type of semigroups. The following result is well-known and appears in \cite{springer}.

\begin{lem}
\label{lem:13}
Let $S$ be a numerical semigroup. Then:
\begin{enumerate}
\item $S$ is symmetric if and only if $\F(S)$ is odd and $\g(S)=\frac{\F(S)+1}{2}$.
\item $S$ is pseudo-symmetric if and only if $\F(S)$ is even and $\g(S)=\frac{\F(S)+2}{2}$.
\item $S$ is irreducible if and only if $\g(S)=\left\lceil\frac{\F(S)+1}{2}\right\rceil$.
\end{enumerate}

\end{lem}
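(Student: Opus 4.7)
The plan is to work with the involution $\sigma: x \mapsto \F(S)-x$ on $\{0,1,\ldots,\F(S)\}$. Recall that $S$ is \emph{symmetric} when for every $x \in \Z$ one has $x \in S$ iff $\F(S)-x \notin S$, and \emph{pseudo-symmetric} when $\F(S)$ is even, $\F(S)/2$ is a gap, and the same biconditional holds for every $x \in \Z$ different from $\F(S)/2$. I will take for granted (from the results cited above as \cite{barucci,froberg,pacific}) that irreducibility coincides with being symmetric or pseudo-symmetric, so that (3) will follow from (1) and (2) by a parity case split.

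The first observation I would set up is that if $x$ and $\F(S)-x$ both belong to $S$ with both positive, then their sum $\F(S)$ would lie in $S$, which is impossible. Hence in every $\sigma$-orbit $\{x, \F(S)-x\}$ with $x \neq \F(S)-x$, at most one representative lies in $S$, so each such pair contributes at least one gap to $\g(S)$. Moreover, when $\F(S)$ is even, the fixed point $\F(S)/2$ is itself forced to be a gap, since otherwise $2 \cdot \F(S)/2 = \F(S) \in S$.

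For (1), if $S$ is symmetric then the symmetry biconditional applied to $x=\F(S)/2$ rules out $\F(S)$ even (it would force $\F(S)/2$ to be simultaneously in $S$ and a gap); with $\F(S)$ odd, $\sigma$ partitions $\{0,\ldots,\F(S)\}$ into $(\F(S)+1)/2$ disjoint pairs and symmetry says each contains exactly one gap, so $\g(S) = (\F(S)+1)/2$. For the converse, the bound ``each pair has $\geq 1$ gap'' combined with $\g(S) = (\F(S)+1)/2$ forces every pair to have exactly one gap, which is the symmetric condition. Part (2) is analogous: $\F(S)/2$ is a forced gap and the remaining $\F(S)/2$ non-fixed $\sigma$-pairs each contribute at least one gap, so $\g(S) \geq \F(S)/2 + 1 = (\F(S)+2)/2$; pseudo-symmetry is exactly the equality case. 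Finally, (3) drops out from (1) and (2) after noting that $\lceil(\F(S)+1)/2\rceil$ equals $(\F(S)+1)/2$ when $\F(S)$ is odd and $(\F(S)+2)/2$ when $\F(S)$ is even, together with the irreducible $=$ symmetric or pseudo-symmetric dichotomy.

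The main obstacle is really only bookkeeping: one must handle the parity of $\F(S)$ cleanly and verify that the rigidity of the $\sigma$-pairing pushes the forward genus inequality (which is Lemma \ref{lem:1}) to equality precisely in the (pseudo-)symmetric cases. Once the two observations above (at-most-one per pair, and $\F(S)/2$ a forced gap when $\F(S)$ is even) are in place, the three parts become elementary counting.
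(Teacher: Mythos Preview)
Your argument is correct. The $\sigma$-pairing on $\{0,1,\ldots,\F(S)\}$ is exactly the right device: the ``at most one element of each pair lies in $S$'' observation gives the lower bound $\g(S)\ge\lceil(\F(S)+1)/2\rceil$ (which is Lemma~\ref{lem:1}), and equality in that lower bound is precisely the (pseudo-)symmetry condition once the parity is sorted out. One cosmetic point: your sentence ``at most one representative lies in $S$'' should, strictly, be phrased as ``at least one representative is a gap'', since the pair $\{0,\F(S)\}$ does have a member of $S$; but this pair has exactly one gap automatically, so nothing is damaged.

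As for the comparison you asked about: the paper does not actually prove this lemma. It is stated as ``well-known'' with a reference to \cite{springer}, and no argument is given in the text. So there is no approach in the paper to compare yours against; your write-up supplies a proof where the paper chose to cite one.
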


We denote by $\I(F)$ the set of irreducible numerical semigroups with Frobenius number $F$. Note that the results in this paper give us an algorithm to compute $\I(F)$ since as a consequence of Lemma \ref{lem:13} we have that $\I(F) = \Sem(F, \left\lceil\frac{F+1}{2}\right\rceil)$. Observe also that $F - \left\lceil\frac{F+1}{2}\right\rceil$ coincides with the cardinal of $\{\left\lceil\frac{F+1}{2}\right\rceil, \ldots, F-1\}$. Hence, by applying Corollary \ref{cor:3} we have that $\E(F, \lceil\frac{F+1}{2}\rceil)$ has only one element $\T(F)=\{\left\lceil\frac{F+1}{2}\right\rceil, \ldots, F-1\} \cup \{F+1, \rightarrow\} \cup \{0\}$. Therefore, $\I(F)=[\T(F)]$ and by applying Proposition \ref{prop:9} we have that $\I(F)$ is a tree with root $\T(F)$. In this way we have reproved some of the results in \cite{arbol}. However, the algorithm in \cite{arbol} to compute $\I(F)$ is more efficient to the one in this section.

In \cite{arbol}, a tree structure is defined over $\I(F)$ where the root is the numerical semigroup $\C(F)=
\{0, \lceil\frac{F+1}{2}\rceil, \rightarrow\} \backslash \{F\}$ coinciding with $\T(F)$. The reader can easily check that, in fact, both trees are the same. Thus, the construction given here extends the construction in \cite{arbol}. However, the procedure given in \cite{arbol} to construct the tree takes advantage of the irreducibility of the semigroups, speeding-up the algorithm.

Finally, based on the results in \cite{arbol}, the authors give in \cite{semilattice} an algorithm to compute $\overline{\Sem}(F)$, the set of numerical semigroups with Frobenius number $F$. As mentioned at the end of the introductory section, this set can also be computed using the approach in this paper. However, the algorithm in \cite{semilattice} is more efficient than the adaptation of the techniques presented here.

The reader may think, after the above mentioned comments, that the algorithm in \cite{semilattice} can be adapted to compute $\Sem(F,g)$. Actually, it is possible by computing first $\overline{\Sem}(F)$ by the algorithm in \cite{semilattice} and then, to construct $\Sem(F,g)$ it is enough to filter the elements in $\overline{\Sem}(F)$ to those with genus $g$. Clearly, this method is less efficient that the approach presented in this paper since we only enumerate the elements that we want to get and no filters are applied. Note that also we may think to construct $\overline{\Sem}(F)$ with our methodology since $\overline{\Sem}(F) = \bigcup_{\lceil\frac{F+1}{2}\rceil\leq g \leq F} \Sem(F,g)$.

\section{A Kunz-coordinates vector approach to compute $\Sem(g)$}
\label{sec:5}

In this section we translate the main results in the sections above to the so-called Kunz-coordinates vectors in order to compute, more efficiently, the set $\Sem(g)$ for any positive integer $g$. For the sake of completeness we construct the Kunz-coordinates vector of the elements in $\Sem(g)$ from its Ap\'ery set.

 Let $S$ be a numerical semigroup and $n \in S\backslash \{0\}$. The \emph{Ap\'ery set} of $S$ with respect to $n$ is the set $\Ap(S,n) = \{s
\in S :  s - n \not\in S\}$. This set was introduced by Ap\'ery in \cite{apery}.

The following characterization  of the Ap\'ery set that appears in \cite{springer} is crucial for our development.
 \begin{lem}
 \label{lem:14}
Let $S$ be a numerical semigroup and $n \in S\backslash \{0\}$. Then $\Ap(S,n) = \{0 = w_0, w_1, \ldots,  w_{n -
1}\}$, where $w_i$ is the least
element in $S$ congruent with $i$ modulo $n$, for $i=1, \ldots, n-1$.
 \end{lem}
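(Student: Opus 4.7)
The plan is to verify two things that are implicit in the statement: that for each $i \in \{0,1,\ldots,n-1\}$ the integer $w_i$ is well-defined as the least element of $S$ in the congruence class $i \pmod n$, and that the resulting set $\{w_0,\ldots,w_{n-1}\}$ coincides with $\Ap(S,n)$. Existence of $w_i$ is immediate from cofiniteness of $S$ in $\N$: every residue class modulo $n$ meets $S$ (in fact in an infinite set, since $S$ contains a tail $\{N,N+1,\ldots\}$ for some $N$), so the well-ordering of $\N$ produces a minimum. Note that $w_0 = 0$ since $0 \in S$ is the least nonnegative integer in its class.

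Next I would establish the inclusion $\{w_0,\ldots,w_{n-1}\} \subseteq \Ap(S,n)$ by a minimality argument: if $w_i - n$ belonged to $S$, it would be a strictly smaller element of $S$ congruent to $i \pmod n$, contradicting the definition of $w_i$. For the reverse inclusion, given $s \in \Ap(S,n)$, let $i$ be its residue modulo $n$, so that $s = w_i + kn$ for some $k \in \N$. Using that $n \in S$ and that $S$ is closed under addition, I deduce that $w_i + (k-1)n = s - n$ lies in $S$ whenever $k \geq 1$, which contradicts $s \in \Ap(S,n)$. Hence $k = 0$ and $s = w_i$.

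Combining these two inclusions gives $\Ap(S,n) = \{w_0,\ldots,w_{n-1}\}$, and since the $w_i$ live in pairwise distinct residue classes modulo $n$, they are automatically distinct, so $\#\Ap(S,n) = n$. There is no serious obstacle in this argument; the step that really carries the content is the reverse inclusion, where the closure of $S$ under addition by $n$ is the essential ingredient forcing every element of the Apéry set to be the minimal representative of its residue class.
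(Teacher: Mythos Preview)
Your argument is correct and complete: the well-definedness of each $w_i$ follows from cofiniteness of $S$ in $\N$, the forward inclusion is a direct minimality contradiction, and the reverse inclusion uses closure of $S$ under addition of $n$ to force $k=0$. The paper itself does not prove this lemma at all; it simply quotes it as a well-known result from \cite{springer}, so your proposal supplies strictly more than the paper does here.
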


Moreover, the set $\Ap(S,n)$ completely determines $S$, since $S =
\langle \Ap(S,n) \cup \{n\} \rangle$ (see \cite{london}), and then, we can identify $S$ with its Ap\'ery set with respect to $n$. The set $\Ap(S,n)$ contains, in general, more information than an arbitrary system of
generators of $S$. For instance, Selmer in \cite{selmer77} gives the formulas,
$\g(S)=\frac{1}{n}\left(\sum_{w \in \Ap(S, n)} w\right) -
\frac{n-1}{2}$ and $\F(S) = \max(\Ap(S,n)) - n$. One can also test if a nonnegative integer $s$ belongs to $S$ by checking if $w_{s\pmod m} \leq s$.

Let $g$ be a positive integer and $S\in \Sem(g)$. By Lemma \ref{lem:1}, $\F(S) \leq 2g-1$, and then $2g>\F(S)$. Thus, $2g\in S\backslash\{0\}$. Then, with this settings, we refer to the Ap\'ery set as the Ap\'ery set with respect to $2g$.

We consider an useful modification of the Ap\'ery set that we call the \emph{Kunz-coordinates vector} as in \cite{bp2011}. Let $S$ be a numerical semigroup and $n \in S\backslash\{0\}$. The \emph{Kunz-coordinates vector} of $S$ with respect to $n$ is the vector $\Kunz(S,n) = x \in \N^{n-1}$ with components $x_i = \frac{w_i-i}{n}$ for $i=1, \ldots, n-1$, where $\Ap(S, n)=\{w_0=0, w_1, \ldots, w_{n-1}\}$, with $w_i$ congruent with $i$ modulo $n$ (Lemma \ref{lem:13}). If $x \in \N^{n-1}$ is a Kunz-coordinates vector, we denote by $S_x$ the unique numerical semigroup such that $\Kunz(S_x, n) = x$.
The Kunz-coordinates vectors were introduced in \cite{kunz} and have been previously analyzed when $n=\m(S)$ in \cite{london}. As mentioned for the Ap\'ery set, through this paper we refer to the Kunz-coordinates vector of a numerical semigroup $S$ as the Kunz-coordinates vector of $S$ with respect to $2\g(S)$, that is, we denote $\Kunz(S)= \Kunz(S, 2\g(S))$. We denote by $\Kunz(g) = \{\Kunz(S): S \in \Sem(g)\}$, the set of Kunz-coordinates vectors of the numerical semigroups with genus $g$. Also, if $F$ is a positive integer $\Kunz(F,g) = \{\Kunz(S): S \in \Sem(F,g)\}$.

Note that if $x\in \Kunz(g)$, $x_i$ is one if $i \not\in S_x$ and zero otherwise. Then, the following lemmas has a trivial proof. There, for an integer vector $x \in \N^n$ we denote by $\supp(x) = \{i \in \{1, \ldots, n\}: x_i \neq 0\} \subseteq \{1, \ldots, n\}$ the \emph{support} of $x$.
\begin{lem}
\label{lem:14}
Let $S$ be a numerical semigroup with genus $g$ and $x=\Kunz(S) \in \N^{2g-1}$. Then:
\begin{enumerate}
\item\label{14:1} $ x \in \{0,1\}^{2g-1}$,
\item\label{14:2} $\N\backslash S = \supp(x)$,
\item\label{14:3} $\F(S) = \max\{i \in \{1, \ldots, 2g-1\}: x_i=1\}$.
\item\label{14:4} $\m(S) = \min\{i \in \{1, \ldots, 2g-1\}: x_i=0\}$.
\item\label{14:5} The set of minimal generators of $S$ is the set $\{i \in \{1, \ldots, 2g-1\}: x_i=0 \text{ and } x_j + x_{i-j} \geq 1 \text{ for all } 1 \leq j <i\}$.
\item\label{14:6} $\PF(S) = \{i \in \{1, \ldots, 2g-1\}: x_i =1, x_j \geq x_{i+j} \quad \text{, for all } j=1, \ldots, 2g-1-i\}$.
\end{enumerate}
\end{lem}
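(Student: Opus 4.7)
The plan is to reduce all six assertions to a single observation. By Lemma \ref{lem:1} we have $\F(S)\leq 2g-1<2g$, so every integer $\geq 2g$ already belongs to $S$, and therefore for each $i\in\{1,\ldots,2g-1\}$ the minimal element $w_i$ of $\Ap(S,2g)$ congruent to $i$ modulo $2g$ is either $i$ (when $i\in S$) or $i+2g$ (when $i\notin S$, since then $i+2g>\F(S)$ forces $i+2g\in S$). Dividing by $2g$ gives $x_i\in\{0,1\}$ with $x_i=1$ iff $i\notin S$, which is exactly items (\ref{14:1}) and (\ref{14:2}).

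From this dictionary, items (\ref{14:3}) and (\ref{14:4}) follow immediately: since $\N\setminus S\subseteq\{1,\ldots,2g-1\}$ and $2g\in S$, the formulas $\F(S)=\max(\N\setminus S)$ and $\m(S)=\min(S\setminus\{0\})$ translate directly into the stated maxima and minima over $\{i\in\{1,\ldots,2g-1\}:x_i=1\}$ and $\{i\in\{1,\ldots,2g-1\}:x_i=0\}$ respectively.

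For (\ref{14:5}) I would invoke Lemma \ref{lem:10}(1): $i$ is a minimal generator iff $i\in S\setminus\{0\}$ and there is no decomposition $i=j+(i-j)$ with both $j,i-j\in S\setminus\{0\}$. Under the dictionary, and restricting to $i\in\{1,\ldots,2g-1\}$, the first condition is $x_i=0$, and the non-existence of a decomposition translates to ``no $j\in\{1,\ldots,i-1\}$ satisfies $x_j=x_{i-j}=0$'', which is the stated inequality $x_j+x_{i-j}\geq 1$ for every $1\leq j<i$.

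For (\ref{14:6}), $i\in\PF(S)$ iff $x_i=1$ and $i+s\in S$ for every $s\in S\setminus\{0\}$. Values $s\geq 2g$ impose no constraint because $i+s>\F(S)$ forces $i+s\in S$; for $s=j\in\{1,\ldots,2g-1\}$ with $x_j=0$, the requirement $i+j\in S$ is automatic when $i+j\geq 2g$ and otherwise reads $x_{i+j}=0$. Packaging both cases over the relevant range $j\in\{1,\ldots,2g-1-i\}$ gives the uniform inequality $x_j\geq x_{i+j}$. The main ``obstacle'' is essentially bookkeeping: one must verify that confining all indices to $\{1,\ldots,2g-1\}$ loses no information, and this is precisely what the Frobenius bound $\F(S)<2g$ guarantees.
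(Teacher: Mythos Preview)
Your argument is correct and is precisely the approach the paper takes: the paper merely remarks before the lemma that ``$x_i$ is one if $i\notin S_x$ and zero otherwise'' and declares the proof trivial, while you have justified this key observation via the Frobenius bound $\F(S)\le 2g-1$ and then carefully unpacked each of the six items from it. You have simply supplied the details that the paper omits.
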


\begin{lem}
\label{lem:15}
Let $F$ and $g$ be positive integers such that $g\leq F\leq 2g-1$. Then:
\begin{enumerate}
\item $\Kunz(F,g) = \{x \in \{0,1\}^{2g-1}: x_F=1, x_{i}=0 \text{ for all } F< i <2g \text{,} x_i+x_j-x_{i+j}\geq 0 \text{ for all } 1\leq i \leq j <2g \text{ with } i+j \leq F\text{, and } \sum_{i=1}^{F} x_i=g\}$.
    \item $\Kunz(g) = \{x \in \{0,1\}^{2g-1}: x_i+x_j-x_{i+j}\geq 0 \text{ for all } 1\leq i \leq j < 2g \text{ with } i+j \leq 2g-1\text{, and } \sum_{i=1}^{2g-1} x_i=g\}$.
\end{enumerate}
\end{lem}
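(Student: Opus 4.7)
The plan is to identify the Kunz vector $x=\Kunz(S)\in\{0,1\}^{2g-1}$ of a semigroup $S\in\Sem(g)$ with the indicator of $\N\setminus S$ inside $\{1,\ldots,2g-1\}$, as described by Lemma~\ref{lem:14}, parts~\eqref{14:1} and~\eqref{14:2}; this is legitimate because $\F(S)\leq 2g-1$ by Lemma~\ref{lem:1}, so every integer $\geq 2g$ already belongs to $S$. Under this dictionary, the semigroup axiom ``$s+t\in S$ whenever $s,t\in S\setminus\{0\}$'' translates coordinate-wise into inequalities on $x$, and conversely every $0$-$1$ vector satisfying those inequalities assembles into a numerical semigroup.

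For item~(1), the forward direction starts from $S\in\Sem(F,g)$ and reads off the four stated properties directly from the dictionary: $x_F=1$ because $F\notin S$; $x_i=0$ for $F<i<2g$ because every such $i$ lies in $S$; $\sum_{i=1}^{F} x_i=|\N\setminus S|=g$; and the closure inequality $x_{i+j}\leq x_i+x_j$ for $i+j\leq F$, which follows by contraposition (if $x_i=x_j=0$ then $i,j\in S$, hence $i+j\in S$, hence $x_{i+j}=0$). For the reverse inclusion I would define $S_x:=\{0\}\cup\{k\in\{1,\ldots,2g-1\}:x_k=0\}\cup\{k\in\N:k\geq 2g\}$ and check that $S_x$ is a numerical semigroup by case-splitting on $s+t$ for nonzero $s,t\in S_x$: if $s+t\leq F$ invoke the assumed Kunz inequality; if $F<s+t<2g$ use the enforced $x_{s+t}=0$; if $s+t\geq 2g$ use $\{2g,\rightarrow\}\subseteq S_x$. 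The equalities $\F(S_x)=F$ and $\g(S_x)=g$ then drop out of the explicit description of $S_x$.

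Item~(2) is derived from item~(1) via the decomposition $\Sem(g)=\bigcup_{F=g}^{2g-1}\Sem(F,g)$: given a $0$-$1$ vector $x$ of length $2g-1$ with $\sum_{i=1}^{2g-1} x_i=g$ that satisfies the closure inequalities for all $i+j\leq 2g-1$, one takes $F:=\max\{i:x_i=1\}$, and the two boundary conditions that distinguish item~(1) from item~(2) (namely $x_F=1$ and $x_i=0$ for $F<i<2g$) become automatic. The main subtlety, which I regard as the only real obstacle, is justifying that truncating the closure inequalities to $i+j\leq F$ (respectively $i+j\leq 2g-1$) loses no information: the ``wrap-around'' Kunz inequalities $x_i+x_j+1\geq x_{(i+j)\bmod 2g}$ for $i+j>2g$ hold automatically since $x$ takes values in $\{0,1\}$, and the inequalities for $i+j$ in the intermediate range $F<i+j<2g$ (in item~(1)) are trivial because $x_{i+j}=0$ by hypothesis. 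This collapse of the classical Kunz conditions in the $0$-$1$ setting is what makes the description in Lemma~\ref{lem:15} so compact.
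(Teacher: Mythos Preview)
Your argument is correct and is precisely the natural expansion of what the paper intends: note that the paper does not actually give a proof of Lemma~\ref{lem:15} but rather declares, just before Lemma~\ref{lem:14}, that ``the following lemmas [have] a trivial proof'', relying on the dictionary of Lemma~\ref{lem:14} (parts~\eqref{14:1}--\eqref{14:3}) between $x=\Kunz(S)$ and the indicator of $\N\setminus S$. Your treatment of the only nontrivial point---why the closure inequalities need only be imposed for $i+j\le F$ (respectively $i+j\le 2g-1$), the remaining Kunz inequalities being automatic in the $0$--$1$ regime---is exactly what has to be said to make the ``trivial'' proof honest.
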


We denote by $\PF^K(x) = \PF(S_x)$ for any $x \in \Kunz(g)$ and any positive integer $g$.

From the above result, $\varphi: \Sem(F,g) \longrightarrow \Kunz(F,g)$, defined as $\varphi(S) = \Kunz(S)$  is a bijective application. Actually, $\varphi^{-1}(x) = \langle 2g, 2gx_1+1, \ldots, 2gx_{2g-1}+2g-1\rangle$. Then, computing the set $\Sem(g)$ is equivalent to compute $\Kunz(g)$.

As in the sections above, to construct $\Sem(g)$ we construct first $\Sem(F,g)$ for $g \leq F \leq 2g-1$. And to construct $\Sem(F,g)$ we defined an equivalence relation over $\Sem(F,g)$ that gave us a partition of this set by its equivalence classes. For the sake of translating the equivalence relation to Kunz-coordinates vectors, we first analyze the set of Kunz-coordinates vectors of the elements in $\E(F,g)$ for any positive integers $F$ and $g$ with $g \leq F \leq 2g-1$.

We denote by $\E^K(F,g) = \{\Kunz(S): S \in \E(F,g)\}$ and we call it the set of elementary Kunz-coordinates vectors with Frobenius number $F$ and genus $g$. By Lemma \ref{lem:14}, we have that $\E^K(F,g) = \{x \in \Kunz(F,g): \max\{i: x_i=1\} < 2\min\{i: x_i=0\}$. And from Corollary \ref{cor:3} we easily deduce the following result where for any set $A\subseteq \{1, \ldots, m\}$ and $n\geq m$ we denote by $\chi^n_A=(x_i)$ the vector in $\N^n$ with coordinates:
$$
x_i = \left\{\begin{array}{rl}
0 & \mbox{if $i \in A$ or $i>m+1$,}\\
1 & \mbox{otherwise}
\end{array}\right. \quad \text{ for } i=1, \ldots, n.
$$
\begin{lem}
\label{lem:16}
Let $F$ and $g$ positive integers. Then:
$$
\E^K(F,g) = \{\chi^{2g-1}_A : A \subseteq \mathcal{A}_{F,g}\}
$$
where $\mathcal{A}_{F,g}= \{A \subseteq \{\lceil\frac{F+1}{2}\rceil, \ldots, F-1\}: \#A=F-g\}$.
\end{lem}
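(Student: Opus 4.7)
The plan is to leverage the bijection $\varphi\colon\Sem(F,g)\to \Kunz(F,g)$ already noted in the section, together with the explicit description of $\E(F,g)$ provided by Corollary~\ref{cor:3}. Since $\E^K(F,g)=\varphi(\E(F,g))$, it suffices to compute $\Kunz(S)$ for each $S\in\E(F,g)$ and verify that this Kunz-coordinates vector is exactly $\chi^{2g-1}_A$, where $A\in\mathcal{A}_{F,g}$ is the finite set of small elements determining $S$ via Corollary~\ref{cor:3}.

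First I would fix $A\in\mathcal{A}_{F,g}$ and set $S=A\cup\{F+1,\rightarrow\}\cup\{0\}$, which belongs to $\E(F,g)$ by Corollary~\ref{cor:3}. Let $x=\Kunz(S)\in\{0,1\}^{2g-1}$. By Lemma~\ref{lem:14}(\ref{14:2}), $\supp(x)=\N\setminus S$ restricted to $\{1,\ldots,2g-1\}$. Hence I need only work out, for each $i\in\{1,\ldots,2g-1\}$, whether $i\in S$ or not. I would split into three ranges: (a) if $1\leq i<\lceil\frac{F+1}{2}\rceil$, then $i\notin S$ because $S$ contains no positive integers below $\lceil\frac{F+1}{2}\rceil$, so $x_i=1$; (b) if $\lceil\frac{F+1}{2}\rceil\leq i\leq F$, then $i\in S$ exactly when $i\in A$ (noting $F\notin A$ since $A\subseteq\{\lceil\frac{F+1}{2}\rceil,\ldots,F-1\}$ and $F=\F(S)\notin S$), so $x_i=0$ iff $i\in A$ and $x_i=1$ otherwise; (c) if $F<i\leq 2g-1$, then $i\in S$ because $\{F+1,\rightarrow\}\subseteq S$ and $i<2g$ ensures $i$ lies in this tail, so $x_i=0$. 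Comparing with the definition of $\chi^{2g-1}_A$ (taking the implicit upper index $m=F-1$, so that the condition ``$i>m+1$'' becomes ``$i>F$''), this is exactly $x=\chi^{2g-1}_A$, giving the inclusion $\E^K(F,g)\supseteq\{\chi^{2g-1}_A:A\in\mathcal{A}_{F,g}\}$.

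For the reverse inclusion, I would take $y\in\E^K(F,g)$, so $y=\Kunz(S')$ for some $S'\in\E(F,g)$. By Corollary~\ref{cor:3} again, $S'=A'\cup\{F+1,\rightarrow\}\cup\{0\}$ for a unique $A'\in\mathcal{A}_{F,g}$, and applying the computation of the previous paragraph to $S'$ yields $y=\chi^{2g-1}_{A'}$. This shows the opposite containment and completes the equality.

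The only subtlety I anticipate is bookkeeping around the boundary index $i=F$ and the tail $F<i\leq 2g-1$: one must use $F\leq 2g-1$ (guaranteed by the hypothesis $F\leq 2g-1$ of Proposition~\ref{prop:2}) so that the statement $x_i=0$ for $F<i\leq 2g-1$ is non-vacuous in the correct way, and one must make sure that the implicit ``$m$'' in the definition of $\chi^{2g-1}_A$ is interpreted so that the vanishing region matches $\{F+1,\ldots,2g-1\}$. Beyond that matching of indices, the argument is a direct unwinding of definitions with no combinatorial hurdle, since Corollary~\ref{cor:3} already carries all the substantive content.
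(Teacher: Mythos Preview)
Your proposal is correct and follows exactly the approach the paper intends: the paper offers no separate proof of Lemma~\ref{lem:16}, merely stating that it is ``easily deduced'' from Corollary~\ref{cor:3}, and your argument is precisely this deduction spelled out (passing each $S=A\cup\{F+1,\rightarrow\}\cup\{0\}\in\E(F,g)$ through the Kunz map and reading off the resulting $0$--$1$ vector). Your careful identification of the implicit parameter $m=F-1$ in the definition of $\chi^{2g-1}_A$ is exactly the bookkeeping the paper leaves to the reader, and your silent correction of the typo ``$A\subseteq\mathcal{A}_{F,g}$'' to ``$A\in\mathcal{A}_{F,g}$'' is the intended reading.
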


Next, to define the analogous equivalence relation of the one over $\Sem(F,g)$ but for $\Kunz(F,g)$ we give the following immediate result. For $i \in \{1, \ldots, 2g-1\}$, we denote by $\e_i$ the $2g-1$-tuple having a $1$ as its $i$th entry and zeros otherwise.
\begin{lem}
\label{lem:17}
Let $S$ be a numerical semigroup with genus $g$, $x=\Kunz(S)$ and $i \in \{1, \ldots, 2g-1\}$. Then:
\begin{enumerate}
\item If $i$ is a minimal generator of $S$, then $\Kunz(S \backslash\{i\}) = x + \e_i$.
\item If $i \not\in S$ such that $S \cup \{i\}$ is a numerical semigroup,  then $\Kunz(S \cup \{i\}) = x - \e_i$.
\end{enumerate}
\end{lem}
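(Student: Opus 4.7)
The plan is to exploit the binary description of the Kunz-coordinates vector already encoded in Lemma \ref{lem:14}: since $S$ has genus $g$ we have $\F(S)\le 2g-1$, so $2g\in S$, and for every $i\in\{1,\ldots,2g-1\}$ the representative $w_i$ of the class $i\pmod{2g}$ equals $i$ when $i\in S$ and equals $i+2g$ when $i\notin S$ (no other candidate can lie in the open interval $(i,i+2g)$). Thus $x=\Kunz(S)$ is precisely the indicator vector of the gaps of $S$ lying in $\{1,\ldots,2g-1\}$, and the same description will apply to the other two semigroups once I check that we remain in the same ambient dimension.

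Next I would verify that $2g$ is still a legitimate base element for both $S\setminus\{i\}$ and $S\cup\{i\}$. In case (1), Lemma \ref{lem:10}(2) ensures that $S\setminus\{i\}$ is a numerical semigroup, and $\F(S\setminus\{i\})=\max(\F(S),i)\le 2g-1$, so $2g\in S\setminus\{i\}$. In case (2), $\F(S\cup\{i\})\le\F(S)\le 2g-1$, so again $2g$ lies in the semigroup. Hence both $\Kunz(S\setminus\{i\})$ and $\Kunz(S\cup\{i\})$ can be computed with respect to the same base $2g$ and live in $\{0,1\}^{2g-1}$, which is what the statement implicitly requires when it writes $x\pm\e_i$ with $\e_i$ a $(2g-1)$-tuple.

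With these preparations the two assertions reduce to a single-coordinate comparison of indicator vectors. For (1), $i\in S$ forces $x_i=0$; passing to $S\setminus\{i\}$ turns $i$ into a gap, so the $i$th coordinate flips to $1$, while the status of every other element of $\{1,\ldots,2g-1\}$ in $S$ is untouched, so all other coordinates are unchanged and $\Kunz(S\setminus\{i\})=x+\e_i$. For (2), $i\notin S$ gives $x_i=1$, and inserting $i$ into $S$ removes $i$ from the gap set and leaves every other coordinate as it was, so that coordinate flips to $0$ and $\Kunz(S\cup\{i\})=x-\e_i$. The only subtlety in the whole argument is the base/dimension bookkeeping in the second step; once that is pinned down, both statements follow from comparing indicator vectors.
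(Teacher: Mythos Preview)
Your argument is correct and matches the paper's intent: the paper states this lemma as ``immediate'' without proof, relying on the description in Lemma~\ref{lem:14}\eqref{14:2} that $x=\Kunz(S)$ is the indicator vector of $\N\setminus S$ inside $\{1,\ldots,2g-1\}$, which is exactly the identification you spell out.

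One point worth keeping from your write-up is the base/dimension check. Strictly speaking, the paper's convention $\Kunz(T)=\Kunz(T,2\g(T))$ would make $\Kunz(S\setminus\{i\})$ a $(2g+1)$-tuple and $\Kunz(S\cup\{i\})$ a $(2g-3)$-tuple, so the equalities $x\pm\e_i$ only make literal sense if all three vectors are taken with respect to the \emph{same} base $2g$. You correctly observe that $2g\in S\setminus\{i\}$ (because $\F(S\setminus\{i\})=\max(\F(S),i)\le 2g-1$) and $2g\in S\cup\{i\}$, so this common base is available and the statement should be read that way. The paper uses the lemma only inside $\Kunz(F,g)$ with $g$ fixed, so this reading is clearly the intended one; your making it explicit is a genuine clarification.
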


We define $\theta^K: \Kunz(F,g) \longrightarrow \E^K(F,g)$ as $\theta^K(x) = x + \dsum_{i<\frac{F}{2}: x_i=0} \e_i - \dsum_{i<\frac{F}{2}: x_i=0} \e_{F-i}$. Note that by Lemma \ref{lem:17}, $\Kunz(\theta(S_x))=\theta^K(x)$. By Proposition \ref{prop:6} we have that $\theta^K$ is a surjective application and such that $\theta^K(x)=x$ if and only if $x \in \E^K(F,g)$.

We are now ready to define the announce equivalence relation over $\Kunz(F,g)$. Let $x, y \in \Kunz(F,g)$, $xR^K y$ if $\theta^K(x) = \theta^K(y)$. As an immediate consequence of Corollary \ref{cor:7} we have the following result.

\begin{cor}
\label{cor:18}
Let $F$ and $g$ be positive integers with $g \leq F \leq 2g-1$. Then, $\frac{\Kunz(F,g)}{R^K} = \{[x]: x \in \E^K(F,g)\}$. Furthermore, if $x, y \in \E^K(F,g)$ and $x\neq y$, then $[x] \cap [y] =\emptyset$.
\end{cor}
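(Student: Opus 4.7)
The plan is to transfer Corollary \ref{cor:7} along the bijection $\varphi: \Sem(F,g) \to \Kunz(F,g)$, $S \mapsto \Kunz(S)$. Once we know that the equivalence relations $R$ on $\Sem(F,g)$ and $R^K$ on $\Kunz(F,g)$ correspond under $\varphi$, everything claimed about the quotient $\tfrac{\Kunz(F,g)}{R^K}$ is a formal consequence of the analogous statement for $\tfrac{\Sem(F,g)}{R}$.

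The first step is to verify the compatibility $\Kunz(\theta(S_x)) = \theta^K(x)$ for every $x \in \Kunz(F,g)$, which has already been observed right after the definition of $\theta^K$ as an application of Lemma \ref{lem:17}: passing from $S_x$ to $\theta(S_x)$ removes each $i < F/2$ with $i \in S_x$ and adjoins $F-i$ in its place, and by Lemma \ref{lem:17} this is exactly the effect of adding $\e_i$ and subtracting $\e_{F-i}$ in Kunz coordinates. Consequently, for $x,y \in \Kunz(F,g)$,
\[
xR^Ky \iff \theta^K(x)=\theta^K(y) \iff \Kunz(\theta(S_x))=\Kunz(\theta(S_y)) \iff \theta(S_x)=\theta(S_y) \iff S_x R S_y,
\]
where we use that $\Kunz$ is injective. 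Thus $\varphi$ is an isomorphism of sets with equivalence relations, and $\varphi([S]) = [\Kunz(S)]$ for every $S \in \Sem(F,g)$.

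Next, since $\varphi$ restricts to a bijection $\E(F,g) \to \E^K(F,g)$ (by Lemma \ref{lem:16}, or directly from the fact that $\theta^K(x)=x$ iff $x \in \E^K(F,g)$), Corollary \ref{cor:7} gives
\[
\frac{\Sem(F,g)}{R} = \{[S] : S \in \E(F,g)\},
\]
and applying $\varphi$ to both sides yields
\[
\frac{\Kunz(F,g)}{R^K} = \{[x] : x \in \E^K(F,g)\}.
\]
For the disjointness part, if $x, y \in \E^K(F,g)$ with $x \neq y$, then $S_x, S_y \in \E(F,g)$ with $S_x \neq S_y$, so by Corollary \ref{cor:7} the classes $[S_x]$ and $[S_y]$ are disjoint in $\Sem(F,g)$; applying $\varphi$ gives $[x] \cap [y] = \emptyset$.

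There is no real obstacle here; the only point that requires a moment of attention is confirming the identity $\Kunz \circ \theta = \theta^K \circ \Kunz$, and this is routine once one checks that adding $\e_i$ and subtracting $\e_{F-i}$ in Kunz coordinates realizes the set-theoretic swap of $i$ for $F-i$, exactly as Lemma \ref{lem:17} asserts. After that, the corollary is just Corollary \ref{cor:7} restated through the bijection.
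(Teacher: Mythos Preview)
Your proof is correct and follows exactly the approach the paper intends: the paper states Corollary~\ref{cor:18} as ``an immediate consequence of Corollary~\ref{cor:7}'' via the bijection $\varphi$ and the compatibility $\Kunz(\theta(S_x))=\theta^K(x)$ already noted before the statement, and you have simply written out those details. There is nothing to add.
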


We can also define an analogous tree structure over $[x]$ for each $x\in \E^K(F,g)$. Let $\G^K([x])$ be the graph with vertices the elements in $[x]$ and $(y,z)$ is an edge if $z=y+\e_\m-\e_{F-\m}$ where $\m=\min\{i \in \{1, \ldots, 2g-1\}: y_i=0\}$. By Proposition \ref{prop:9}, $\G^K([x])$ is a tree with root $x$. Furthermore, we can easily detect the children of any vertex in $\G^K([x])$ as in Proposition \ref{prop:11}.

 \begin{prop}
 \label{prop:19}
 Let $x \in \E^K(F,g)$ and $z\in [x]$. Then, $y$ is a child of $z$ in $\G^K([x])$ if and only if $y=z+\e_i-\e_{F-i}$, where $i$ verifies:
 \begin{enumerate}
 \item\label{19:1} $z_{i}=0$,
 \item\label{19:2} $z_j + z_{i-j} \geq 1$, for all $j <i$,
 \item\label{19:3} $\frac{F}{2} <i<F$,
 \item\label{19:4} $F-i < \min\{i \in \{1, \ldots, 2g-1\}: z_i=0\}$,
 \item\label{19:5} $2F\neq 3i$,
  \item\label{19:6} $z_{2(F-i)}=0$,
 \item\label{19:7} $z_j + z_{2(F-i)-j} \geq 1$, for all $j <2(F-i)$,
 \item\label{19:8} $F-i\in \PF^K(z+\e_i)$.
 \end{enumerate}
 \end{prop}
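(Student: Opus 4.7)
The plan is to reduce Proposition \ref{prop:19} to Proposition \ref{prop:11} by transporting the statement along the bijection $\varphi : \Sem(F,g) \to \Kunz(F,g)$. Writing $Q = S_z$ and $P = S_y$, the goal is to verify that the edge relation $(y,z)$ in $\G^K([x])$ corresponds exactly to the edge relation $(P,Q)$ in $\G([S_x])$, and that each of \eqref{19:1}--\eqref{19:8} translates to one of the five conditions of Proposition \ref{prop:11}, with $i$ playing the role of the minimal generator of $Q$ removed when passing from $Q$ to $P$ (and $F-i$ playing the role of the new multiplicity $\m(P)=F-x$).

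For the edge correspondence, I would first apply Lemma \ref{lem:14}\eqref{14:5} to rephrase \eqref{19:1}--\eqref{19:2} as ``$i$ is a minimal generator of $Q$''. Lemma \ref{lem:17}(1) then yields $\Kunz(Q\setminus\{i\}) = z + \e_i$. Next, \eqref{19:4} together with Lemma \ref{lem:14}\eqref{14:4} guarantees $F-i < \m(Q) \leq \m(Q\setminus\{i\})$, so $F-i \notin Q\setminus\{i\}$; once one checks (exactly as in the sufficiency part of Proposition \ref{prop:11}) that $(Q\setminus\{i\}) \cup \{F-i\}$ is a numerical semigroup, Lemma \ref{lem:17}(2) gives $\Kunz\bigl((Q\setminus\{i\}) \cup \{F-i\}\bigr) = y$, so $P = (Q\setminus\{i\}) \cup \{F-i\}$ and $\m(P)=F-i$. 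Then $Q = (P\setminus\{\m(P)\}) \cup \{F-\m(P)\}$ and \eqref{19:3} gives $F>2\m(P)$, matching the edge definition in $\G([S_x])$.

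For the remaining conditions the translation is routine: \eqref{19:3} is Proposition \ref{prop:11}(1); \eqref{19:4} is Proposition \ref{prop:11}(2) via Lemma \ref{lem:14}\eqref{14:4}; \eqref{19:5} is Proposition \ref{prop:11}(3); \eqref{19:6}--\eqref{19:7} state, by Lemma \ref{lem:14}\eqref{14:5}, that $2(F-i)$ is a minimal generator of $Q$, which is Proposition \ref{prop:11}(4); and \eqref{19:8} is Proposition \ref{prop:11}(5) after identifying $\PF^K(z+\e_i) = \PF(Q\setminus\{i\})$ through Lemma \ref{lem:17}(1). With these identifications in place, both implications of Proposition \ref{prop:19} follow directly from the corresponding implications of Proposition \ref{prop:11}.

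The main obstacle is purely bookkeeping: keeping straight that $z_j=1$ means $j$ is a \emph{gap} of $S_z$ (so adding $\e_i$ to $z$ \emph{removes} $i$ from the semigroup), and ensuring the intermediate object $Q\setminus\{i\}$ has $F-i$ strictly below its multiplicity so that Lemma \ref{lem:17}(2) applies. These points are settled by \eqref{19:1}--\eqref{19:4}, and no further computation beyond the dictionary provided by Lemmas \ref{lem:14} and \ref{lem:17} is required.
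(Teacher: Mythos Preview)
Your proposal is correct and follows exactly the approach the paper intends: the paper does not write out a separate proof of Proposition~\ref{prop:19} but simply states that one ``can easily detect the children of any vertex in $\G^K([x])$ as in Proposition~\ref{prop:11}'', i.e., the result is obtained by transporting Proposition~\ref{prop:11} along the bijection $\varphi$ using the dictionary of Lemmas~\ref{lem:14} and~\ref{lem:17}. Your write-up spells out this translation in more detail than the paper itself, but the strategy is identical.
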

For any $z  \in [x]$ for $x \in \E^K(F,g)$, we denote by $\Gamma(z)$ the set of elements in $\{1, \ldots, 2g-1\}$ verifying the conditions \eqref{19:1}-\eqref{19:8} in the above proposition.

A pseudocode for the algorithm proposed to enumerate $\Kunz(g)$ is described in Algorithm \ref{alg:1}.

\begin{algorithm2e}[H]
\caption{Computation of $\Kunz(g)$.\label{alg:1}}

\SetKwInOut{Input}{input}
\SetKwInOut{Output}{output}
\Input{A positive integer $g$.}

\For{$F \in \{g, \ldots, 2g-1\}$}{
 Compute  $\E^K(F,g)$.

 \For{$x \in \E^K(F,g)$}{
 Compute the tree $\G^K([x])$ and set $\child=\{x\}$.

 \While{$\child\neq\emptyset$}{

 \For{$y \in \child$}{
 \begin{enumerate}
 \item $\child = \child \backslash \{y\}$,
 \item  Compute $\Gamma(y)$.
 \item $\child=\child \cup \bigcup_{i \in \Gamma(y)} \{y + \e_i-\e_{F-i}\}$.
 \end{enumerate}
 Set $S= S \cup \child$.
 }}}}

\Output{$\Sem(g)$.}
\end{algorithm2e}
Observe that if $x \in \Kunz(F,g)$, by Lemma \ref{lem:16}, $x\in \{0,1\}^{2g-1}$ and then, to construct its children we only need to interchange two coordinates, $i$ and $F-i$, where $i$ verifies the conditions of Proposition \ref{prop:19}.
 \begin{ex}
 \label{ex:20}
 Let us construct the set $\Kunz(5)$. First, we compute $\E^K(F,5)$ for $5 \leq F \leq 9$.
 \begin{itemize}
 \item For $F=5$, Since $5-5=0$, $\E^K(5,5)=\{(1,1,1,1,1,0,0,0,0)\}$.
 \item For $F=6$, $\mathcal{A}_{6,5}=\{4,5\}$ and $6-5=1$, so $\E^K(6,5)=\{(1,1,1,0,1,1,0,0,0)$, $(1,1,1,1,0,1,0,0,0)\}$.
 \item For $F=7$, $\mathcal{A}_{7,5}=\{4,5,6\}$ and $7-5=2$, so $\E^K(7,5)=\{(1,1,1,0,0,1,1,0,0)$, $(1,1,1,0,1,0,1,0,0)$,  $(1,1,1,1,0,0,1,0,0)\}$.
 \item For $F=8$, $\mathcal{A}_{8,5}=\{5,6,7\}$ and $8-5=3$, so $\E^K(8,5)=\{(1,1,1,1,0,0,0,1,0)\}$.
  \item And for $F=9$, $\mathcal{A}_{9,5}=\{5,6,7,8\}$ and $9-5=4$, so $\E^K(8,5)=\{(1,1,1,1,0,0,0,0,1)\}$.
 \end{itemize}
 Let us now compute the children of the elements in $\E^K(F,5)$ for $5\leq F \leq 9$. For $x=(1,1,1,1,1,0,0,0,0)$, it is clear that it has no children since there are no indices in $\{\lceil\frac{5}{2}\rceil, \ldots, 5-1\}$ with $x_i=0$.\\ $(1,1,1,0,1,1,0,0,0)$, $(1,1,1,1,0,1,0,0,0)$ and $(1,1,1,0,0,1,1,0,0)$ have also no children. For $x=(1,1,1,0,1,0,1,0,0)$, we get that there is only one index, $i=4$, verifying the conditions of Proposition \ref{prop:19}, hence $y=x+\e_4-\e_3 = (1,1,0,1,1,0,1,0,0)$ is the unique child of $x$. $y$ has no children.

 Analogously, we can compute the tree associated to the class of each element in $\E^K(F,5)$ for  $5\leq F \leq 9$.In Figure \ref{fig:1} we show the forest $\{\G^K([x]): x \in \E^K(F,5), 5\leq F \leq 9\}$.

 Note that we can easily extract the numerical semigroups with genus $5$ from their Kunz-coordinates, $\Kunz(5)$. For instance, by applying that $S_x= \langle 10, 10x_1+1, \ldots, 10x_{9}+9 \rangle$, for any $x \in \Kunz(5)$, we get that $\Sem(5)=\{\langle  6, 7, 8, 9, 10, 11 \rangle, \langle  5, 7, 8, 9, 11 \rangle,$ $\langle  5, 6, 8, 9 \rangle,$ $\langle  5, 6, 7, 9 \rangle,$   $\langle  5, 6, 7, 8 \rangle$, $\langle  4, 6, 7 \rangle$, $\langle  4, 7, 9, 10 \rangle$, $\langle  4, 6, 9, 11 \rangle$,
  $\langle  4, 5, 11 \rangle$, $\langle  3, 8, 10 \rangle$, $\langle  3, 7, 11 \rangle$, $\langle  2, 11 \rangle \}$.

 \begin{figure}[h]
\framebox{
 {\scriptsize$
 \begin{array}{cccc}
\xymatrix{(1,1,1,1,1,0,0,0,0)} & \xymatrix{(1,1,1,0,1,1,0,0,0)} & \xymatrix{(1,1,1,1,0,1,0,0,0)} & \xymatrix{(1,1,1,0,0,1,1,0,0)}\\
\xymatrix{\\(1,1,1,0,1,0,1,0,0)\\(1,1,0,1,1,0,1,0,0)\ar[u]} & \xymatrix{\\(1,1,1,1,0,0,1,0,0)} & \xymatrix{\\(1,1,1,1,0,0,0,1,0)\\(1,1,0,1,1,0,0,1,0)} &
\xymatrix{(1,1,1,1,0,0,0,0,1)\\(1,1,1,0,1,0,0,0,1)\ar[u]\\ (1,0,1,0,1,0,1,0,1)\ar[u]}
 \end{array}
$}}
\caption{Forest structure of $\Kunz(5)$.\label{fig:1}}
 \end{figure}

 \end{ex}

\begin{rem}
The approach in this paper also allows us to count the numerical semigroups with a more efficient method that those applied in \cite{counting,bras08a,bras08b,elizalde,zhao} since when applying our methodology we do not storage in memory the whole set of numerical semigroups in $\Sem(g)$ or the Kunz-coordinates vectors in $\Kunz(g)$ that are computed in the overall procedure. Observe that to compute the number of elements in $\Sem(g)$ we compute $\Sem(F,g)$ for $F=g, \ldots, 2g-1$, so once $\Sem(g,F)$ is computed (and then, counted) we do not need it anymore. Actually, since to compute $\Sem(F,g)$ we compute all $[S]$ for any $S\in \E(F,g)$, once one class is computed we can delete and only keep the number of elements of it.

Furthermore, if we are only interested on counting but not in enumerating the complete set of numerical semigroups in $\Sem(g)$ for some positive integer $g$, the code in Algorithm \ref{alg:1} can be modified conveniently to keep in memory the least possible Kunz-coordinates that are computed in the process.
\end{rem}

\end{document}